\providecommand{\U}[1]{\protect\rule{.1in}{.1in}}
\newtheorem{proposition}{Proposition}[section]
\newtheorem{theorem}[proposition]{Theorem}
\newtheorem{corollary}[proposition]{Corollary}
\newtheorem{lemma}[proposition]{Lemma}
\newtheorem{definition}[proposition]{Definition}
\newtheorem{remark}[proposition]{Remark}
\newtheorem{condition}[proposition]{Condition}
\numberwithin{equation}{section}
\numberwithin{proposition}{section}
\newcommand{\eps}{\epsilon}
\newcommand{\R}{\mathbb{R}}
\newcommand{\one}{\mathbf{1}}
\newcommand{ \parbar}[1]{ { \left( #1 \right)} }
\numberwithin{equation}{section}
\numberwithin{proposition}{section}
\begin{document}

\title{Fluctuation analysis and short time asymptotics for multiple scales diffusion processes}

\author{Konstantinos Spiliopoulos}
\address{Department of  Mathematics and Statistics\\
Boston University, Boston, MA, 02215}
\email{kspiliop@math.bu.edu}

\date{\today}

\maketitle

\begin{abstract}
We consider the limiting behavior  of fluctuations of small noise diffusions with multiple scales around their homogenized deterministic limit. We allow full dependence of the coefficients on the slow and fast motion. These processes arise naturally when one is interested in short time asymptotics of multiple scale diffusions. We do not make periodicity assumptions, but we impose conditions on the fast motion to guarantee ergodicity. Depending on the order of interaction between the fast scale and the size of the noise we get different behavior. In certain cases additional drift terms arise in the limiting process, which are explicitly characterized.  These results provide a better approximation to the limiting behavior of such processes when compared to the law of large numbers homogenization limit.
\end{abstract}

\textbf{Keywords}: Fluctuations Analysis; Central Limit Theorem; Multiscale Diffusion Processes.

\textbf{AMS}: 60F05, 60F17, 60G17, 60J60

\section{Introduction}
Consider the $m+(d-m)$ dimensional process $(X^{\epsilon},Y^{\epsilon})=\{(X^{\epsilon}_{s},Y^{\epsilon}_{s}), 0\leq s\leq T\}$ satisfying the system of stochastic differential equations (SDE's)

\begin{eqnarray}
dX^{\epsilon}_{s}&=&\left[  \frac{\epsilon}{\delta}b\left(  X^{\epsilon}_{s}%
,Y^{\epsilon}_{s}\right)+c\left(  X^{\epsilon}_{s}%
,Y^{\epsilon}_{s}\right)\right]   ds+\sqrt{\epsilon}%
\sigma\left(  X^{\epsilon}_{s},Y^{\epsilon}_{s}\right)
dW_{s},\nonumber\\
dY^{\epsilon}_{s}&=&\frac{1}{\delta}\left[  \frac{\epsilon}{\delta}f\left(  X^{\epsilon}_{s}%
,Y^{\epsilon}_{s}\right)  +g\left(  X^{\epsilon}_{s}%
,Y^{\epsilon}_{s}\right)\right] ds+\frac{\sqrt{\epsilon}}{\delta}\left[
\tau_{1}\left(  X^{\epsilon}_{s},Y^{\epsilon}_{s}\right)
dW_{s}+\tau_{2}\left(  X^{\epsilon}_{s},Y^{\epsilon}_{s}\right)dB_{s}\right], \label{Eq:Main}\\
X^{\epsilon}_{0}&=&x_{0},\hspace{0.2cm}Y^{\epsilon}_{0}=y_{0}\nonumber
\end{eqnarray}
where $\delta=\delta(\epsilon)\downarrow0$ as $\epsilon\downarrow0$ and $(W_{s}, B_{s})$ is a $2\kappa-$dimensional standard Wiener process. Assumptions on the coefficients $b(x,y),c(x,y),\sigma(x,y), f(x,y), g(x,y), \tau_{1}(x,y)$ and $\tau_{2}(x,y)$ are given in Condition \ref{A:Assumption1}. The purpose of this paper is obtain the limiting behavior of the fluctuations process
\begin{equation}
\eta^{\epsilon}_{t}=\frac{X^{\epsilon}_{t}-\bar{X}_{t}}{\beta^{\epsilon}},\quad \textrm{as }\epsilon\downarrow 0,
\end{equation}
where $\beta^{\epsilon}$ is the appropriate normalization constant and $\bar{X}$ is the homogenization limit of $X^{\epsilon}$ as $\epsilon,\delta\downarrow 0$. We are interested in the limiting behavior of $\{\eta^{\epsilon}_{\cdot}, \epsilon>0\}$ in the following two cases
\begin{equation}
\lim_{\epsilon\downarrow0}\frac{\epsilon}{\delta}=%
\begin{cases}
\infty & \text{Regime 1,}\\
\gamma\in(0,\infty) & \text{Regime 2.}\\
\end{cases}
\label{Def:ThreePossibleRegimes}%
\end{equation}
Depending on the regime of interaction both the law of large numbers limit $\bar{X}$ and the limit of the correction process $\left\{\eta^\epsilon_{t}, \epsilon>0, t\in[0,T]\right\}$ are different. It is important to note that we do not make  compactness assumptions for the fast motion such as periodicity.

The novelty of this work lies on the consideration of systems of slow and fast motion with coefficients fully dependent on the slow and fast motion in the whole space. The lack of compactness makes the analysis more complicated compared to the periodic case. At this point we make use of the recent results in \cite{PardouxVeretennikov1, PardouxVeretennikov2} that allow to pose and study Poisson equations on the whole space. Moreover, it is interesting to note that the interaction of $\epsilon$ with $\delta$ has several consequences for the limiting behavior. Not only, is the limit different for each regime of interaction, but additional terms may appear in the limiting equation that are not present in the absence of multiscale features.

Models like (\ref{Eq:Main}) can be thought as perturbations of an underlying deterministic dynamical systems, $\dot{X}=\bar{\lambda}(X)$, by small noise and multiple scales. For example, if $\bar{\lambda}(x)$ is defined as the integral of a given function with respect to a measure $\mu$, then this dynamical system can be thought of as a small noise perturbation of a system of slow and fast motion, where the integrating measure $\mu$ is the invariant measure of the fast motion.
Such models also arise when one deals with mulitple scale systems but the interest is in small time asymptotics. For example, consider a classical system of stochastic differential equations with slow and fast components

\begin{eqnarray}
dX_{s}&=&c\left(  X_{s}%
,Y_{s}\right)   ds+%
\sigma\left(  X_{s},Y_{s}\right)
dW^{(1)}_{s},\nonumber\\
dY_{s}&=&  \frac{1}{\delta^{2}}f\left(  X_{s}%
,Y_{s}\right)  ds+\frac{1}{\delta}\tau\left(  X_{s},Y_{s}\right)
dW^{(2)}_{s}, \nonumber
\end{eqnarray}
where  $W^{(1)}$ and $W^{(2)}$ are correlated Wiener processes. If one is interested in short time asymptotics, it is convenient to rescale time $s\mapsto \epsilon s$, and then the process $(X^{\epsilon}_{s},Y^{\epsilon}_{s})=(X_{\epsilon s},Y_{\epsilon s})$ satisfies (\ref{Eq:Main}) with $b(x,y)=g(x,y)=0$ and $c(x,y)$ replaced by $\epsilon c(x,y)$ .  A related example  with connections to large deviations theory is presented in Section \ref{S:Examples}.

Of course, the history of similar limiting theorems for stochastic dynamical systems is long. Limiting theorems, such as law of large numbers, central limit theorems and large deviations for $X^{\epsilon}$ when $b=0$ and the coefficients $c$ and $\sigma$ are independent of the $y$ variable are available, see for example \cite{Freidlin1978, FWBook}. Cases with averaging effects in periodic or stationary random environments have also been studied for special cases of the system (\ref{Eq:Main}), see \cite{BaierFreidlin,DupuisSpiliopoulos,Freidlin1978, FS, FWBook,Guillin,KlebanerLipster,LiptserPaper,Spiliopoulos2012}. Some law of large numbers and large deviations results in the whole space are available in \cite{PardouxVeretennikov1, PardouxVeretennikov2,Veretennikov,VeretennikovSPA2000}. To the best knowledge of the author, the existing literature does not address the fluctuations analysis done in this paper.

The rest of the paper is organized as follows. In Section \ref{S:Notation} we introduce notation, assumptions and summarize preliminary results. In particular, we present the law of large numbers result, namely the limit of the slow component $X^{\epsilon}$ as $\epsilon\downarrow 0$. We also recall, regularity results on Poisson equations on the whole space \cite{PardouxVeretennikov1,PardouxVeretennikov2} that will be used throughout the paper. These results are necessary in order to study the behavior of correctors in the absence of the periodicity assumption. In Section \ref{S:MainTheorem}, we present our main result. The proof for Regime $2$ is given in Section \ref{S:ProofRegime1}, whereas the proof for Regime $1$ is given in Section \ref{S:ProofRegime2}. The order of consideration of the two regimes is reversed in order to be consistent with the existing large deviations literature \cite{DupuisSpiliopoulos,Spiliopoulos2012} and because Regime $2$ is simpler to analyze than Regime $1$. An example to illustrate our results is presented in Section \ref{S:Examples}. In Section \ref{S:Examples} we also connect the validity of our central limit theorem to the second derivative of the related large deviations action functional (obtained in \cite{Spiliopoulos2012}) in a simple case.

\section{Notation, assumptions and preliminary results}\label{S:Notation}

In this section we present preliminary results that will be used throughout the paper. However, first we need to establish notation and pose the assumptions on the coefficients.

For notational convenience we denote by $\mathcal{Y}=\mathbb{R}^{d-m}$ the state space of the fast motion.  The functions $b , c,f,g,\sigma,\tau_{1}$ and $\tau_{2}$ satisfy the following conditions:
\begin{condition}
\label{A:Assumption1}
\begin{enumerate}
\item The diffusion matrix $\tau_{1}\tau_{1}^{T}+\tau_{2}\tau_{2}^{T}$ is uniformly nondegenerate.
\item Let $h$ be any of the functions $b$ or $c$. We assume that $h(\cdot,y)\in C^{2}(\mathbb{R}^{m})$ for all $y\in\mathcal{Y}$, $\frac{\partial^{2}h}{\partial y^{2}}\in C\left(\mathbb{R}^{m},\mathcal{Y}\right)$, $h(x,\cdot)\in C^{\alpha}\left(\mathcal{Y}\right)$ uniformly in $x\in\mathbb{R}^{m}$ for some $\alpha\in(0,1)$ and that there exist $K$ and $q$ such that
    \[
    \sum_{i=0}^{2}\left|\frac{\partial^{i} h}{\partial x^{i}}(x,y)\right|\leq K\left(1+|y|^{q}\right).
    \]
\item For every $N>0$ there exists a constant $C(N)$ such that for all $x_{1},x_{2}\in\mathbb{R}^{m}$ and $|y|\leq N$, the diffusion matrix $\sigma$ satisfies
\[
\left|\sigma(x_{1},y)-\sigma(x_{2},y)\right|\leq C(N)|x_{1}-x_{2}|.
\]
Moreover, there exists $K>0$ and  $q>0$ such that
\[
|\sigma(x,y)|\leq K (1+|x|^{1/2})(1+|y|^{q}).
\]
\item The functions $f(x,y), \tau_{1}(x,y)$ and $\tau_{2}(x,y)$ are  $C^{2,2+\alpha}_{b}(\mathbb{R}^{m}\times\mathcal{Y})$ with $\alpha\in(0,1)$. Namely, they have two bounded derivatives in $x$ and $y$, with all partial derivatives being H\"{o}lder continuous, with exponent $\alpha$, with respect to $y$, uniformly in $x$.
\item In the case of Regime $1$, function $g$ is assumed to have the smoothness and growth conditions of $b$ and $c$. In the case of Regime $2$, function $g$ is assumed to have the smoothness and growth conditions of $f,\tau_{1}$ and $\tau_{2}$.
\end{enumerate}
\end{condition}

\begin{definition}
\label{Def:ThreePossibleOperators} For
$(x,y)\in\mathbb{R}^{m}\times\mathcal{Y}$
and for Regime $i=1,2$ defined in (\ref{Def:ThreePossibleRegimes})
we define the operators $\mathcal{L}_{x}^{i}$ with domain of definition $\mathcal{D}(\mathcal{L}_{z,x}^{i})=\mathcal{C}%
^{2}(\mathcal{Y})$ as follows
\begin{align}
\mathcal{L}_{x}^{1}  &  =f(x,\cdot)D_{y}+\frac{1}{2}\textrm{tr}\left[\left(\tau_{1}\tau_{1}^{T}+\tau_{2}\tau_{2}^{T}\right)(x,\cdot)D^{2}_{y}\right] \nonumber\\
\mathcal{L}_{x}^{2}  &  =\left[  \gamma
f(x,\cdot)+g(x,\cdot)\right]
D_{y} +\gamma\frac{1}{2}\textrm{tr}\left[\left(\tau_{1}\tau_{1}^{T}+\tau_{2}\tau_{2}^{T}\right)(x,\cdot)D^{2}_{y}\right]\nonumber
\end{align}
\end{definition}

In order to guarantee existence of a unique invariant measures associated to the operators $\mathcal{L}_{x}^{i},i=1,2$ just defined, we need to impose, apart from the non-degeneracy condition on the diffusion coefficient, the following:

\begin{condition}\label{A:LyapunovCondition}
We assume that
\begin{enumerate}
\item{Regime $1$: $\lim_{|y|\rightarrow\infty}\sup_{x\in\mathbb{R}^{m}}f(x,y)\cdot y=-\infty$.}
\item{Regime $2$: $\lim_{|y|\rightarrow\infty}\sup_{x\in\mathbb{R}^{m}}(\gamma f(x,y)+g(x,y))\cdot y=-\infty$.}
    \end{enumerate}
\end{condition}

To this end, let us denote by $\mu_{i}(dy|x)$ the unique invariant measures corresponding to the operators $\mathcal{L}_{x}^{i}$. For Regime $1$, we additionally assume

\begin{condition}
\label{A:Assumption2} Under Regime 1, we assume the centering condition for the drift term $b$:
\[
\int_{\mathcal{Y}}b(x,y)\mu_{1}(dy|x)=0.
\]
\end{condition}

Next we recall some regularity results from \cite{PardouxVeretennikov1,PardouxVeretennikov2} (Lemma 4 in \cite{PardouxVeretennikov1} and Theorem 3 in \cite{PardouxVeretennikov2}) for Poisson equations on the whole space, appropriately phrased to cover our case of interest.
\begin{theorem} \label{T:RegularityPoisson}
Let Conditions \ref{A:Assumption1} and \ref{A:LyapunovCondition} be satisfied. Assume that $G(x,y)\in C^{2,\alpha}\left(\mathbb{R}^{m}, \mathcal{Y}\right)$,
\[
\int_{\mathcal{Y}}G(x,y)\mu_{i}(dy|x)=0.
\]
and that for some positive constants $K$ and $q$,
\[
    \sum_{i=0}^{2}\left|\frac{\partial^{i} G}{\partial x^{i}}(x,y)\right|\leq K\left(1+|y|^{q}\right)
\]
Then, the solution to the Poisson equation
\begin{eqnarray}
& &\mathcal{L}_{x}^{i}u(x,y)=-G(x,y),\quad\int_{\mathcal{Y}}%
G(x,y)\mu_{i}(dy|x)=0 \label{Eq:CellProblem}
\end{eqnarray}
satisfies $u(\cdot,y)\in C^{2}$ for every $y\in\mathcal{Y}$, $\partial_{x}^{2}u\in C\left(\mathbb{R}^{m}\times\mathcal{Y}\right)$ and there exist positive constants $K'$ and $q'$ such that
\[
    \sum_{i=0}^{2}\left|\frac{\partial^{i} u}{\partial x^{i}}(x,y)\right|+\left|\frac{\partial^{2} u}{\partial x\partial y}(x,y)\right|\leq K'\left(1+|y|^{q'}\right)
\]
\end{theorem}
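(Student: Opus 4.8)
The plan is to treat $x$ as a frozen parameter and study \eqref{Eq:CellProblem} as a linear elliptic PDE in the fast variable on $\mathcal{Y}=\mathbb{R}^{d-m}$, and then to recover the regularity and growth in $x$ by differentiating in the parameter. For fixed $x$, Condition \ref{A:LyapunovCondition} renders the drift of $\mathcal{L}_{x}^{i}$ recurrent and, together with the uniform nondegeneracy in Condition \ref{A:Assumption1}(i), guarantees the unique invariant measure $\mu_{i}(dy|x)$ together with exponential ergodicity of the frozen diffusion $Y^{x}$ whose generator is $\mathcal{L}_{x}^{i}$. I would then represent the solution probabilistically as
\[
u(x,y)=\int_{0}^{\infty}\E_{y}\left[G(x,Y^{x}_{t})\right]dt,
\]
and use the centering condition $\int_{\mathcal{Y}}G(x,y)\mu_{i}(dy|x)=0$ to show the time integral converges: the ergodic decay of $\E_{y}[G(x,Y^{x}_{t})]$ to its zero average, quantified by a Foster--Lyapunov estimate, controls the tail, while the polynomial growth of $G$ is absorbed by the moment bounds $\sup_{t}\E_{y}[|Y^{x}_{t}|^{p}]\leq C(1+|y|^{p})$ that Condition \ref{A:LyapunovCondition} furnishes. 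This simultaneously yields existence, uniqueness (up to the additive function of $x$ fixed by $\int u\,\mu_{i}(dy|x)=0$), and the polynomial growth of $u$ itself.

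For the $y$-regularity I would invoke interior Schauder estimates: since the coefficients $f$ and $\tau_{1}\tau_{1}^{T}+\tau_{2}\tau_{2}^{T}$ are $C^{\alpha}$ in $y$ by Condition \ref{A:Assumption1}(iv) and $G(x,\cdot)\in C^{\alpha}$, elliptic regularity upgrades $u(x,\cdot)$ to $C^{2+\alpha}$ locally, with the behavior at infinity governed by the representation above.

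The substantive step is the regularity and growth in the parameter $x$. Differentiating \eqref{Eq:CellProblem} formally gives
\[
\mathcal{L}_{x}^{i}(\partial_{x}u)=-\partial_{x}G-(\partial_{x}\mathcal{L}_{x}^{i})u,
\]
where $(\partial_{x}\mathcal{L}_{x}^{i})u$ collects the terms in which the $x$-derivative falls on the coefficients $f$ and $\tau_{1}\tau_{1}^{T}+\tau_{2}\tau_{2}^{T}$, multiplied by $D_{y}u$ and $D^{2}_{y}u$. To solve this new Poisson equation I must verify its solvability condition, namely that the right-hand side integrates to zero against $\mu_{i}(dy|x)$. This follows by differentiating $\int_{\mathcal{Y}}G(x,y)\mu_{i}(dy|x)=0$ in $x$ and combining it with the invariance relation $\int_{\mathcal{Y}}(\mathcal{L}_{x}^{i}\phi)\,\mu_{i}(dy|x)=0$ applied to $\phi=u(x,\cdot)$: differentiating the latter and using $\partial_{x}(\mathcal{L}_{x}^{i}u)=(\partial_{x}\mathcal{L}_{x}^{i})u+\mathcal{L}_{x}^{i}(\partial_{x}u)$ forces the contribution of $(\partial_{x}\mathcal{L}_{x}^{i})u$ to cancel exactly against the term carrying the $x$-derivative of $\mu_{i}$, leaving the required centering of the full right-hand side. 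Once solvability holds, $\partial_{x}u$ admits the same representation and inherits $C^{2+\alpha}$ regularity in $y$ and polynomial growth, now with a larger exponent $q'$ reflecting the extra powers of $|y|$ produced by $\partial_{x}f$, $\partial_{x}(\tau\tau^{T})$ and by $D_{y}u,D^{2}_{y}u$. Iterating the argument once more produces $\partial_{x}^{2}u$ with its bound, and applying Schauder estimates to the $y$-equation satisfied by $\partial_{x}u$ controls the mixed derivative $\partial_{x}\partial_{y}u$.

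I expect the principal obstacle to be exactly the rigorous justification of the parameter differentiation together with the verification of the solvability condition for the differentiated equations, since this entangles the $x$-dependence of $u$ with the $x$-dependence of the invariant measure $\mu_{i}(dy|x)$; making this precise, and tracking how the growth exponent is inflated at each differentiation while keeping the moment and ergodicity estimates uniform in $x$, is precisely the content of the Pardoux--Veretennikov estimates (Lemma 4 in \cite{PardouxVeretennikov1} and Theorem 3 in \cite{PardouxVeretennikov2}), which I would cite to close these points rather than reproduce the full resolvent analysis.
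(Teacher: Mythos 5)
Your proposal is correct and takes essentially the same route as the paper, which offers no proof of this theorem at all but states it as a rephrasing of Lemma 4 of \cite{PardouxVeretennikov1} and Theorem 3 of \cite{PardouxVeretennikov2}: your sketch --- the representation $u(x,y)=\int_{0}^{\infty}\E_{y}\left[G\left(x,Y^{i,x}_{t}\right)\right]dt$ (which the paper records immediately after the theorem), parameter differentiation, and the cancellation showing the differentiated right-hand side is centered against $\mu_{i}(dy|x)$ --- is precisely the Pardoux--Veretennikov strategy, and like the paper you ultimately defer to those references for the quantitative estimates. The only caveat is minor: Condition \ref{A:LyapunovCondition} guarantees polynomial-in-time decay of $\E_{y}\left[G\left(x,Y^{i,x}_{t}\right)\right]$ at arbitrary order (uniformly in $x$, with polynomial dependence on $|y|$) rather than exponential ergodicity, but this suffices to control the tail of the time integral exactly as you use it.
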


\begin{remark}\label{R:LyapunovCondition}
It seems plausible that Condition \ref{A:LyapunovCondition} can be weekend and replaced by less strong assumptions that still guarantee existence of an invariant measure. As an example, assume for every $x\in\mathbb{R}^{m}$
\begin{enumerate}
\item{Regime $1$: $\limsup_{|y|\rightarrow\infty}\left[f(x,y)\cdot y+ \left[\tau_{1}(x,y)\tau_{1}^{T}(x,y)+\tau_{2}(x,y)\tau_{2}^{T}(x,y)\right]\right]<0$.}
\item{Regime $2$: $\limsup_{|y|\rightarrow\infty}\left[(\gamma f(x,y)+g(x,y))\cdot y+\gamma \left[\tau_{1}(x,y)\tau_{1}^{T}(x,y)+\tau_{2}(x,y)\tau_{2}^{T}(x,y)\right]\right]<0$.}
    \end{enumerate}
The results that we use from \cite{PardouxVeretennikov1,PardouxVeretennikov2} hold under the assumed there Condition \ref{A:LyapunovCondition}. However, an examination of the proofs of the quoted results from those papers, shows that weaker condition, as the aforementioned one, can be used. Moreover, we note here that under such conditions, the standard Lyapunov type condition for existence of an invariant measure of \cite{Hasminskii} is satisfied (see Example 3.9 of \cite{Hasminskii}).
\end{remark}

The solution to the Poisson equation has the representation

\begin{equation}
u(x,y)=\int^{\infty}_{0}E_{x,y} G\left(x,Y^{i,x}_{t}\right)dt
\end{equation}
where $Y^{i,x}_{t}$ is the Markov process with infinitesimal generator $\mathcal{L}_{x}^{i}$.

Letting for each $l\in\{1,\ldots,m\}$, $G=b_{\ell}$, we then denote by
$\chi=(\chi_{1},\ldots,\chi_{m})$ the solution to (\ref{Eq:CellProblem}). This is the solution to the so-called cell problem in periodic homogenization, e.g., \cite{BLP}.

It will become useful to define functions $\lambda_{i}(x,y)$ and $\bar{\lambda}_{i}(x)$, $i=1,2$, as follows:

\begin{definition}
\label{Def:ThreePossibleFunctions} For $(x,y)\in\mathbb{R}^{m}\times\mathcal{Y}$
and for Regime $i=1,2$
defined in (\ref{Def:ThreePossibleRegimes}) we define the functions $\lambda_{i}(x,y):\mathbb{R}%
^{m}\times\mathcal{Y}\rightarrow\mathbb{R}^{m}$ by
\begin{align}
\lambda_{1}(x,y)  &  = c(x,y)+\frac{\partial\chi}{\partial y}(x,y) g(x,y) \nonumber\\
\lambda_{2}(x,y)  &  =\gamma b(x,y)+c(x,y)\nonumber
\end{align}
where $\chi=(\chi_{1},\ldots,\chi_{m})$ is defined by (\ref{Eq:CellProblem}) with $G=b_{\ell}$. Set \begin{equation*}
 \bar{\lambda}_{i}(x)=\int_{\mathcal{Y}}\lambda_{i}(x,y)\mu_{i}(dy|x).
\end{equation*}
\end{definition}

Due to Condition \ref{A:Assumption1}, Proposition 1 and Theorem 3 of \cite{PardouxVeretennikov2}, we get that $\lambda_{i}(x,y), i=1,2$ are once continuously differentiable with respect to the x-variable. Moreover, by Condition \ref{A:Assumption1} and Theorem 2 of \cite{PardouxVeretennikov2}, we also have that the invariant measures $\mu_{i}(dy|x)$ are once continuously differentiable with respect to $x$. Thus, we infer that $\bar{\lambda}_{i}\in C^{1}(\mathbb{R}^{m})$. For $x\in\mathbb{R}^{m}$, let $\bar{X}^{i}_{s}$ be the solution to the ordinary differential equation
\begin{equation}
\bar{X}^{i}_{t}=x+\int_{0}^{t}  \bar{\lambda}_{i}(\bar{X}^{i}_{s})ds.\label{Eq:LimitingODE}
\end{equation}

We may write $\bar{X}^{i}_{t}(x)$ if we want to emphasize the dependence on the initial point. Based on the results in \cite{Spiliopoulos2012}, we obtain the following theorem, which essentially is the law of large numbers for (\ref{Eq:Main}). The proof follows as in \cite{Spiliopoulos2012}, so we only include a short note.

\begin{theorem}
\label{T:LLN} Consider any $x_{0}\in\mathbb{R}^{m}$ and any $T>0$. Assume Conditions \ref{A:Assumption1} and \ref{A:LyapunovCondition}. In addition, in Regime 2 assume
Condition \ref{A:Assumption2}. Then, we have that for all  $\eta>0$ and $i=1,2$
\begin{equation}
\lim_{\epsilon \to 0}\mathbb{P}\left\{ \sup_{0\leq t\leq T}\left|X^{\epsilon}_{t}-\bar{X}^{i}_{t} (x_0) \right|>\eta\right\}=0
, \quad T>0.\label{Eq:LLN}
\end{equation}
\end{theorem}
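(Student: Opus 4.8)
The plan is to prove both regimes by the corrector (Poisson equation) method, reducing the statement to a Gronwall estimate. Write $\mathcal{A}^{\epsilon}$ for the generator of the joint process $(X^{\epsilon},Y^{\epsilon})$ from (\ref{Eq:Main}). The crucial observation is that, acting on functions $\phi(x,y)$ of both variables, $\mathcal{A}^{\epsilon}\phi$ has leading part $\frac{\epsilon}{\delta^{2}}\mathcal{L}_{x}^{1}\phi$ in Regime $1$ and $\frac{1}{\delta}\mathcal{L}_{x}^{2}\phi$ in Regime $2$, while the remaining contributions (the $D_{x}\phi$ drift, the $X$-diffusion and the mixed term) carry small prefactors $1,\epsilon,\sqrt{\epsilon},\delta$. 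Solving suitable Poisson equations for $\mathcal{L}_{x}^{i}$ and invoking the regularity and polynomial growth bounds of Theorem \ref{T:RegularityPoisson}, one can use It\^o's formula to trade the fast oscillations in the drift for boundary plus martingale terms that are asymptotically negligible. Since Regime $2$ needs only the final averaging step, I would, as the paper suggests, treat it first and then Regime $1$.

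A preliminary step, and the one I expect to be the real obstacle, is to establish uniform-in-$\epsilon$ moment bounds on the fast motion, of the form $\sup_{\epsilon}\mathbb{E}\int_{0}^{T}\bigl(1+|Y^{\epsilon}_{s}|^{p}\bigr)\,ds<\infty$ for every $p$, together with enough control of $\sup_{t\le T}|Y^{\epsilon}_{t}|$ to handle the boundary terms. This is where the lack of compactness bites: the correctors furnished by Theorem \ref{T:RegularityPoisson} and the coefficient $\sigma$ grow only polynomially in $y$, so every error term carries a power of $|Y^{\epsilon}|$ and must be dominated through the ergodicity of the fast motion. These bounds follow from the recurrence built into Condition \ref{A:LyapunovCondition} (or its weakening in Remark \ref{R:LyapunovCondition}) by a Lyapunov/It\^o argument applied to $|Y^{\epsilon}_{t}|^{p}$; combined with the growth bounds in Condition \ref{A:Assumption1} they guarantee that all the $O(\delta)$, $O(\epsilon)$, $O(\sqrt{\epsilon})$ and $O(\delta^{2}/\epsilon)$ remainders tend to $0$ in probability, uniformly on $[0,T]$.

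In Regime $1$ the drift $\frac{\epsilon}{\delta}b$ is large, so I would first remove it with the cell-problem corrector $\chi$ solving $\mathcal{L}_{x}^{1}\chi=-b$, which is solvable thanks to the centering Condition \ref{A:Assumption2}. Applying It\^o to $\chi(X^{\epsilon}_{t},Y^{\epsilon}_{t})$, using $\mathcal{A}^{\epsilon}\chi=-\tfrac{\epsilon}{\delta^{2}}b+\cdots$, multiplying by $\delta$ and rearranging yields
\begin{equation*}
\frac{\epsilon}{\delta}\int_{0}^{t}b(X^{\epsilon}_{s},Y^{\epsilon}_{s})\,ds=-\delta\bigl[\chi(X^{\epsilon}_{t},Y^{\epsilon}_{t})-\chi(x_{0},y_{0})\bigr]+\int_{0}^{t}\Bigl(\frac{\partial\chi}{\partial y}\,g\Bigr)(X^{\epsilon}_{s},Y^{\epsilon}_{s})\,ds+R^{\epsilon}_{t},
\end{equation*}
with $R^{\epsilon}_{t}$ a sum of martingale and small-prefactor terms that vanish by the moment bounds. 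Substituting into the $X$-equation and recalling $\lambda_{1}=c+\tfrac{\partial\chi}{\partial y}g$ (Definition \ref{Def:ThreePossibleFunctions}), the slow component becomes $X^{\epsilon}_{t}=x_{0}+\int_{0}^{t}\lambda_{1}(X^{\epsilon}_{s},Y^{\epsilon}_{s})\,ds+o_{\mathbb{P}}(1)$, the $\sqrt{\epsilon}\sigma$ stochastic integral being negligible since, by the growth bound on $\sigma$ and the moment bounds, its quadratic variation is $O(\epsilon)$. In Regime $2$ no such step is needed: $\frac{\epsilon}{\delta}b+c\to\gamma b+c=\lambda_{2}$ directly, and replacing $\frac{\epsilon}{\delta}$ by $\gamma$ costs only $(\tfrac{\epsilon}{\delta}-\gamma)\int_{0}^{t}b\,ds=o_{\mathbb{P}}(1)$, so that again $X^{\epsilon}_{t}=x_{0}+\int_{0}^{t}\lambda_{2}(X^{\epsilon}_{s},Y^{\epsilon}_{s})\,ds+o_{\mathbb{P}}(1)$.

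The remaining averaging step is common to both regimes. I would introduce the second corrector $\psi_{i}$ solving $\mathcal{L}_{x}^{i}\psi_{i}=-\bigl(\lambda_{i}-\bar{\lambda}_{i}\bigr)$, which is well posed because $\lambda_{i}-\bar{\lambda}_{i}$ is $\mu_{i}$-centered by Definition \ref{Def:ThreePossibleFunctions} and satisfies the hypotheses of Theorem \ref{T:RegularityPoisson}. The same It\^o computation (again multiplying by $\delta$, and in Regime $2$ absorbing the $\tfrac{\epsilon}{\delta}-\gamma$ discrepancy in the generator) then gives $\int_{0}^{t}\bigl(\lambda_{i}-\bar{\lambda}_{i}\bigr)(X^{\epsilon}_{s},Y^{\epsilon}_{s})\,ds\to0$ in probability, whence $X^{\epsilon}_{t}=x_{0}+\int_{0}^{t}\bar{\lambda}_{i}(X^{\epsilon}_{s})\,ds+o_{\mathbb{P}}(1)$ uniformly on $[0,T]$. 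Comparing with (\ref{Eq:LimitingODE}) and using that $\bar{\lambda}_{i}\in C^{1}$ is locally Lipschitz, a Gronwall argument up to the stopping time $\tau^{\epsilon}_{M}=\inf\{t:|X^{\epsilon}_{t}|\ge M\}$, followed by letting $M\to\infty$ (the exit probability being controlled by the moment bounds, since $\bar{X}^{i}$ is bounded on $[0,T]$), yields $\sup_{0\le t\le T}\bigl|X^{\epsilon}_{t}-\bar{X}^{i}_{t}(x_{0})\bigr|\to0$ in probability, which is exactly (\ref{Eq:LLN}).
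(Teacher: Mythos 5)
Your proposal is correct in outline, but it takes a genuinely different route from the paper. The paper's own proof of Theorem \ref{T:LLN} is a two-line reduction: it invokes the weak convergence $X^{\epsilon}_{\cdot}\Rightarrow\bar{X}^{i}_{\cdot}$ in $\mathcal{C}([0,T];\mathbb{R}^{m})$ from Theorem 3.2 of \cite{Spiliopoulos2012}, and then upgrades to convergence in probability of the uniform norm using only the fact that the limit is deterministic. You instead prove the statement from scratch: removing the large drift $\frac{\epsilon}{\delta}b$ via the cell corrector $\chi$ in Regime 1, averaging $\lambda_{i}-\bar{\lambda}_{i}$ via a second corrector, killing the remainders through moment bounds on $Y^{\epsilon}$, and closing with Gronwall plus localization. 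This is essentially the machinery the paper develops only later, for the fluctuation theorem (the representation formulas in Lemmas \ref{L:Representationfor_I} and \ref{L:Representationfor_II} are exactly your two It\^{o}-corrector identities, there divided by $\beta^{\epsilon}$), so your route buys a self-contained quantitative proof whose estimates are reused verbatim at the CLT level, while the paper's citation buys brevity and delegates the genuinely delicate points — in particular the uniform moment and supremum bounds on the unbounded fast motion, which the paper obtains not by a bare Lyapunov computation on $|Y^{\epsilon}_{t}|^{p}$ but via the time-change device (\ref{Eq:TimeTrasnformation}) and Propositions 1--2 of \cite{PardouxVeretennikov1}; your sketch of that step is plausible but is the one place where detail is owed. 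Two further points worth noting: first, in Regime 1 the well-posedness of the Poisson equation for $\Phi_{1}$ requires polynomial bounds on two $x$-derivatives of $\lambda_{1}=c+\frac{\partial\chi}{\partial y}g$, hence on $\frac{\partial^{3}\chi}{\partial y\,\partial x^{2}}$, which does not follow from Theorem \ref{T:RegularityPoisson} alone and must be imported from Theorem 1 of \cite{PardouxVeretennikov1}, as the paper does in the proof of Lemma \ref{L:Representationfor_II}; second, you correctly attach the centering Condition \ref{A:Assumption2} to Regime 1, where the cell problem $\mathcal{L}^{1}_{x}\chi=-b$ needs it — the theorem statement's phrase ``in Regime 2 assume Condition \ref{A:Assumption2}'' is evidently a typo, since Condition \ref{A:Assumption2} is itself stated under Regime 1, and your usage silently corrects it.
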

\begin{proof}[Sketch of the proof]
Under our assumptions, Theorem 3.2 in \cite{Spiliopoulos2012} guarantees weak convergence of $X^{\epsilon}_{\cdot}$ to $\bar{X}^{i}_{\cdot}$ in $\mathcal{C}([0,T];\mathbb{R}^{m})$ for any $T>0$. Since here, the limiting process $\bar{X}^{i}_{\cdot}$ is deterministic, we obtain the convergence in probability claim of the theorem. Also, due to our assumptions, the limiting ODE's in (\ref{Eq:LimitingODE}) are well defined and have a unique solution in their corresponding regime.
\end{proof}

\section{Main theorem}\label{S:MainTheorem}

In this section we describe our main results. Proofs are in the subsequent sections. A term that will appear frequently in the analysis is
\begin{equation}\label{Eq:CorrectionTerm}
\Gamma^{\epsilon,\delta}_{t}=\int_{0}^{t}\left(\lambda_{i}\left(X^{\epsilon}_{s},Y^{\epsilon}_{s}\right)-\bar{\lambda}_{i}(X^{\epsilon}_{s})\right)ds
\end{equation}
We investigate its dependence on $\epsilon$ and $\delta$ by considering the auxiliary Poisson equation
\begin{eqnarray}
& & \mathcal{L}_{x}^{i}\Phi_{i}(x,y)=-\left(\lambda_{i}  \left( x,y\right) - \bar{\lambda}_{i}(x)\right),\quad\int_{\mathcal{Y}}%
\Phi_{i}(x,y)\mu_{i}(dy|x)=0,\hspace{0.1cm} \label{Eq:CellProblemCLT}\\
& & \Phi_{i} \textrm{ grows at most polynomially in }y\textrm{ as } |y|\rightarrow\infty\nonumber
\end{eqnarray}
for $i=1,2$. By construction, the right hand side of the PDE averages to zero. Therefore, Theorem \ref{T:RegularityPoisson} implies that the function $\Phi_{i}(x,y)$ is uniquely defined and has the smoothness properties of the solution $u$ to (\ref{Eq:CellProblem}), if the right hand side has the appropriate smoothness assumptions. It turns out that Condition \ref{A:Assumption1} guarantees that this is the case. More details will be discussed in the corresponding proofs.

For notational convenience, we shall denote by
\[
\bar f_{i}(x) = \int_{ \mathcal{Y} } f (x,y) \mu^i (dy|x).
\]
the average of a function $f : \R^m \times \mathcal{Y} \to \R^m$ with respect to $\mu^i$.

For Regimes $i=1,2$ we define $J_{i}$ and $q_{i}$ as follows:

\begin{align}
J_{1} (x,y)&=\left[\frac{\partial \Phi_{1}}{\partial y} g\right](x,y), \\
q_{1}(x,y)&= \left[\left(\sigma+\frac{\partial\chi}{\partial y}\tau_{1}\right)\left(\sigma+\frac{\partial\chi}{\partial y}\tau_{1}\right)^{T}+\left(\frac{\partial\chi}{\partial y}\tau_{2}\right)\left(\frac{\partial\chi}{\partial y}\tau_{2}\right)^{T}\right](x,y), \nonumber
\end{align}
and
\begin{align}
J_{2}( x,y)&=\left[b-\frac{1}{\gamma}\left(\lambda_{2}-\bar{\lambda}_{2}+\frac{\partial \Phi_{2}}{\partial y} g\right)\right](x,y), \label{Eq:Jfunction}\\
q_{2}(x,y)&= \left[\left(\sigma+\frac{\partial\Phi_{2}}{\partial y}\tau_{1}\right)\left(\sigma+\frac{\partial\Phi_{2}}{\partial y}\tau_{1}\right)^{T}+\left(\frac{\partial\Phi_{2}}{\partial y}\tau_{2}\right)\left(\frac{\partial\Phi_{2}}{\partial y}\tau_{2}\right)^{T}\right](x,y). \nonumber
\end{align}

With these definitions in hand, we are ready to state our results.

\begin{theorem}\label{T:CLT2}
Let $T>0$. Consider the solution to the equation (\ref{Eq:Main}). Assume Regime $i=1,2$ and let Conditions \ref{A:Assumption1},\ref{A:LyapunovCondition} and \ref{A:Assumption2} holding. Set $\theta_{1}^{\epsilon}=\frac{\delta}{\epsilon}$ and $\theta_{2}^{\epsilon}=\frac{\epsilon}{\delta}-\gamma$. Let $\ell_{i}\in [0,\infty]$ with $i=1,2$ be given by
 $$ \ell_{i} = \lim_{ \eps \to 0 } \frac{\sqrt{\epsilon}}{\theta^{\epsilon}_{i}}$$
and
\begin{equation*}
\beta^{\epsilon}_{i} =
\begin{cases}
\theta_{i}^{\epsilon} & ,\ell_{i}=0,\\
\sqrt{\epsilon} & , \ell_{i} \in (0,\infty]
\end{cases}
\end{equation*}
The process
$$\eta^{\epsilon}_{t}=\frac{X^{\epsilon}_{t}-\bar{X}^{i}_{t}}{\beta^{\epsilon}_{i}}$$ converges weakly in the space of continuous functions in $\mathcal{C}\left([0,T];\mathbb{R}^{m}\right)$
to the solution of the Ornstein-Uhlenbeck type process
\begin{eqnarray}
 d\eta_{t}&=&D\bar{\lambda}_{i}(\bar{X}^{i}_{t}(x_{0}))\eta_{t}dt+\left[ \ell^{-1}_{i} \one (\ell_{i} \in(0,\infty] ) + \one (\ell_{i} =0 ) \right ]\bar{J}_{i}(\bar{X}^{i}_{t}(x_{0}))dt+\nonumber\\
  & &\quad +\one \left(\ell_{i}\neq 0 \right)\bar{q}^{1/2}_{i}(\bar{X}^{i}_{t}(x_{0}))d\tilde{W}_{t}\nonumber\\
\eta_{0}&=&0.\label{Eq:LimitingProcess}
\end{eqnarray}
where $\tilde{W}$ is an $k-$dimensional standard Wiener process.
\end{theorem}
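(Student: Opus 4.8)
The plan is to exhibit $\eta^{\epsilon}$ as the solution of a linear stochastic equation whose coefficients converge, and then to pass to the limit by a tightness-plus-identification argument. I would first rewrite the slow dynamics by substituting the correctors. In Regime~$2$ there is no singular drift, since $\frac{\epsilon}{\delta}b+c=\lambda_{2}+\theta^{\epsilon}_{2}b$, so I would apply It\^o's formula to $\Phi_{2}(X^{\epsilon}_{s},Y^{\epsilon}_{s})$ and use the Poisson equation (\ref{Eq:CellProblemCLT}); this is legitimate because Theorem~\ref{T:RegularityPoisson} supplies the regularity and the polynomial-in-$y$ growth bounds on $\Phi_{2}$ and its derivatives, including the mixed derivative $\partial^{2}_{xy}\Phi_{2}$. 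The fast generator entering It\^o's formula factors as $\frac{1}{\delta}\mathcal{L}^{2}_{x}+\frac{\theta^{\epsilon}_{2}}{\delta}\mathcal{L}^{1}_{x}$ plus slow and cross terms, so the identity $\mathcal{L}^{2}_{x}\Phi_{2}=-(\lambda_{2}-\bar\lambda_{2})$ turns $\int_{0}^{t}(\lambda_{2}-\bar\lambda_{2})\,ds$ into a boundary term of order $\delta$, a drift term carrying the prefactor $\theta^{\epsilon}_{2}$, and a stochastic integral of order $\sqrt{\epsilon}$. Using $\mathcal{L}^{2}_{x}=\gamma\mathcal{L}^{1}_{x}+gD_{y}$ I expect the $\theta^{\epsilon}_{2}$-drift, after combining with the $\theta^{\epsilon}_{2}b$ term already present, to collapse to $\theta^{\epsilon}_{2}\int_{0}^{t}J_{2}(X^{\epsilon}_{s},Y^{\epsilon}_{s})\,ds$, while the $\sqrt{\epsilon}$ martingale merges with $\sqrt{\epsilon}\int\sigma\,dW$ into a martingale with quadratic-variation density $\epsilon q_{2}$. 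In Regime~$1$ the same scheme applies, except that one first removes the singular drift $\frac{\epsilon}{\delta}b$ by applying It\^o's formula to the cell corrector $\chi$ solving $\mathcal{L}^{1}_{x}\chi=-b$ (here Condition~\ref{A:Assumption2} is what makes $\chi$ well defined); this produces the effective drift $\lambda_{1}=c+\partial_{y}\chi\,g$ and the effective density $q_{1}$, after which $\int(\lambda_{1}-\bar\lambda_{1})\,ds$ is treated with $\Phi_{1}$ exactly as above, yielding the term $\theta^{\epsilon}_{1}\int_{0}^{t}J_{1}\,ds$.

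Next I would divide by $\beta^{\epsilon}_{i}$ and identify each contribution. The linearization $\bar\lambda_{i}(X^{\epsilon}_{s})-\bar\lambda_{i}(\bar X^{i}_{s})=D\bar\lambda_{i}(\bar X^{i}_{s})(X^{\epsilon}_{s}-\bar X^{i}_{s})+o(|X^{\epsilon}_{s}-\bar X^{i}_{s}|)$, together with $X^{\epsilon}-\bar X^{i}=\beta^{\epsilon}_{i}\eta^{\epsilon}$ and the $C^{1}$ regularity of $\bar\lambda_{i}$, produces the Ornstein--Uhlenbeck drift $\int_{0}^{t}D\bar\lambda_{i}(\bar X^{i}_{s})\eta^{\epsilon}_{s}\,ds$. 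The bias prefactor $\theta^{\epsilon}_{i}/\beta^{\epsilon}_{i}$ tends to $1$, $\ell^{-1}_{i}$, or $0$ according to whether $\ell_{i}=0$, $\ell_{i}\in(0,\infty)$, or $\ell_{i}=\infty$, which is exactly the indicator structure multiplying $\bar J_{i}$ in (\ref{Eq:LimitingProcess}); likewise the diffusive prefactor $\sqrt{\epsilon}/\beta^{\epsilon}_{i}$ tends to $1$ when $\ell_{i}\neq0$ and to $0$ when $\ell_{i}=0$, producing the factor $\one(\ell_{i}\neq0)$ in front of the noise. Every boundary, slow, and cross term carries an explicit extra power of $\delta$ or $\epsilon$ and therefore vanishes after division by $\beta^{\epsilon}_{i}\ge\sqrt{\epsilon}$.

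To make these statements rigorous I would argue by weak convergence. First I would establish tightness of $\{\eta^{\epsilon}\}$ in $\mathcal{C}([0,T];\mathbb{R}^{m})$ from the above decomposition, using uniform-in-$\epsilon$ polynomial moment bounds on $Y^{\epsilon}$ (to dominate the polynomial growth of $\chi,\Phi_{i},J_{i},q_{i}$ furnished by Theorem~\ref{T:RegularityPoisson}) together with the Burkholder--Davis--Gundy inequality and Gronwall's lemma. Then I would identify the limit: the time averages satisfy $\int_{0}^{t}J_{i}(X^{\epsilon}_{s},Y^{\epsilon}_{s})\,ds\to\int_{0}^{t}\bar J_{i}(\bar X^{i}_{s})\,ds$ and $\int_{0}^{t}q_{i}(X^{\epsilon}_{s},Y^{\epsilon}_{s})\,ds\to\int_{0}^{t}\bar q_{i}(\bar X^{i}_{s})\,ds$ by ergodic averaging of the fast motion against $\mu_{i}(dy|x)$ combined with the law of large numbers $X^{\epsilon}\to\bar X^{i}$ of Theorem~\ref{T:LLN}; the accumulated martingale is then recognized, through the limit of its bracket, as $\int_{0}^{t}\bar q^{1/2}_{i}(\bar X^{i}_{s})\,d\tilde W_{s}$ by a martingale central limit theorem. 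Since (\ref{Eq:LimitingProcess}) is linear in $\eta$ with continuous coefficients, its solution is unique in law, so tightness together with the identification of all limit points yields the asserted weak convergence.

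I expect the main obstacle to be the ergodic averaging in the non-compact fast space $\mathcal{Y}=\mathbb{R}^{d-m}$: without periodicity one cannot invoke compactness, so both the convergence of the time averages of $J_{i}$ and $q_{i}$ and the suppression of every error term hinge on obtaining polynomial moment bounds for $Y^{\epsilon}$ that are uniform in $\epsilon$ and then coupling them to the growth estimates on the Poisson solutions granted by Theorem~\ref{T:RegularityPoisson} under the Lyapunov Condition~\ref{A:LyapunovCondition}. Securing these moment bounds and verifying that each boundary, slow, and cross contribution genuinely vanishes after division by $\beta^{\epsilon}_{i}$ is where the bulk of the technical work lies.
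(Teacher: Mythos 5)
Your proposal is correct and follows essentially the same route as the paper: the identical corrector decomposition via the Poisson equations for $\chi$ (Regime 1, using the centering Condition \ref{A:Assumption2}) and for $\Phi_{i}$ from (\ref{Eq:CellProblemCLT}) with regularity from Theorem \ref{T:RegularityPoisson}, the same collapse of the $\theta^{\epsilon}_{i}$-drift to $\theta^{\epsilon}_{i}\int_{0}^{t}J_{i}\,ds$ via $\mathcal{L}^{2}_{x}=\gamma\mathcal{L}^{1}_{x}+gD_{y}$, the same prefactor asymptotics $\theta^{\epsilon}_{i}/\beta^{\epsilon}_{i}$ and $\sqrt{\epsilon}/\beta^{\epsilon}_{i}$ yielding the indicator structure, and the same tightness-plus-martingale-problem identification with uniqueness in law of the linear limit (these are Lemmas \ref{L:Representationfor_I} and \ref{L:Representationfor_II} and Sections \ref{S:ProofRegime1}--\ref{S:ProofRegime2} of the paper). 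The one place where your sketch is thinner than the paper is the quadratic Taylor remainder $\Lambda[\bar{\lambda}]$, which carries no explicit power of $\epsilon$ or $\delta$ and after division by $\beta^{\epsilon}_{i}$ is of size $\beta^{\epsilon}_{i}|\eta^{\epsilon}|^{2}$, so plain Gronwall is circular; the paper handles it by localizing with the stopping time $\tau^{\epsilon}=\inf\{t:|X^{\epsilon}_{t}-\bar{X}_{t}|>(\beta^{\epsilon})^{\rho}\}$, $\rho\in(1/2,1)$, and then showing $\mathbb{P}[\tau^{\epsilon}<T]\to 0$ (and, in the identification step, works with the shifted process $\psi^{\epsilon}_{t}$ that reabsorbs the boundary terms), refinements your plan would need to make rigorous.
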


The following remark is of interest.

\begin{remark}
Note that if $\ell_{i}\in[0,\infty)$, then the limiting SDE (\ref{Eq:LimitingProcess}) has the additional drift term $\bar{J}_{i}(\bar{X}^{i}_{t}(x_{0}))$, which vanishes from (\ref{Eq:LimitingProcess}) only in the case $\ell_{i}=\infty$.  It is easy to see that $\ell_{1}=\infty$ if $\delta=o(\epsilon^{3/2})$ (Regime 1) and in the case of Regime 2, if $\delta=\frac{1}{\gamma}\epsilon$, then $\ell_{2}=\infty$.
\end{remark}

Notice now that it is not difficult to solve  the SDE (\ref{Eq:LimitingProcess}) explicitly. In particular, letting for $x \in \R^m$, $\Psi^i_{x}$  be the linearization of $\bar X^{i}$ along the orbit of $x$:
\begin{equation}
 \frac{d}{dt}\Psi^i_{x}(t)=D\bar{\lambda}^{i}( \bar X^i_t )\Psi^i_{x}(t), \text{  } \Psi^{i}_{x}(0)=x
\end{equation}
where $D\bar{\lambda}^{i}$ is the Jacobian matrix of $\bar{\lambda}^{i}$ and the defining

\begin{align}
\Theta^{i}_{x_0} (t) &= \Psi_{x_{0}}^i (t) \int_{0}^{t} \left[\Psi_{x_{0}}^i (s)\right]^{-1} \bar{q}^{1/2}_i( \bar X_s^{i} )  d\tilde{W}_{s} , \quad t \geq 0, \label{eqn: thetadef_0}
\end{align}
and
\begin{equation}
H^{i}_{x_0} (t)= \Psi_{x_{0}}^i (t)
\int_0^t \left[\Psi_{x_{0}}^i (s)\right]^{-1} \bar J_{i} \left(  \bar X_{s}^{i} \right)ds
\end{equation}
we obtain by Duhamel's principle that
\begin{align}
\eta_{t}^i (\ell_{i}) =  \Theta^i_{x_0} (t) \one \left(\ell_{i} \ne 0 \right)+  H^i_{x_0} (t)   \left[ \ell^{-1}_{i} \one (\ell_{i} \in(0,\infty] ) + \one (\ell_{i} =0 ) \right ].   \label{eqn: Eta23}
\end{align}

In Sections \ref{S:ProofRegime1} and \ref{S:ProofRegime2} we prove Theorem \ref{T:CLT2}.

\section{Proof of Theorem \ref{T:CLT2} for Regime 2.}\label{S:ProofRegime1}
In this section we present the proof of Theorem \ref{T:CLT2} in the case of Regime 2, i.e., when $\frac{\epsilon}{\delta}\rightarrow \gamma\in(0,\infty)$. For notational convenience we omit emphasizing the dependence of the involved functions on Regime $2$, i.e., we do not write the subscript $2$. Namely, we shall write $\Phi$, instead of $\Phi_{2}$, for the solution of the Poisson equation (\ref{Eq:CellProblemCLT}) and similarly for the functions $J,q,\lambda$, the operator $\mathcal{L}_{x}$ and the measure $\mu$.

Next, we write the equation that $\eta^{\epsilon}=\left(X^{\epsilon}-\bar{X}\right)/\beta^{\epsilon}$ satisfies in a convenient way. The first step is a representation formula for (\ref{Eq:CorrectionTerm}). We have the following lemma.
\begin{lemma}\label{L:Representationfor_I}
Assume Conditions \ref{A:Assumption1} and \ref{A:LyapunovCondition}. The following hold
\begin{enumerate}
\item{The solution to the Poisson equation (\ref{Eq:CellProblemCLT}) satisfies the conclusions of Theorem \ref{T:RegularityPoisson}.}
\item{For every $\epsilon,\delta>0$ we have the representation
\begin{align*}
\Gamma^{\epsilon,\delta}_{t}&=\int_{0}^{t}\left[\lambda  \left( X_{s}^{\epsilon},Y_{s}^{\epsilon}\right) - \bar{\lambda}(X_{s}^{\epsilon})\right]  ds\nonumber\\
&=
\left(\frac{\epsilon}{\delta}-\gamma\right)\int_{0}^{t}(J-b)\left( X_{s}^{\epsilon},Y_{s}^{\epsilon}\right)  ds-\delta\left(\Phi \parbar{ X^\eps _t, Y_{s}^{\epsilon}  }-\Phi \parbar{ X^{\epsilon}_0, Y^{\epsilon}_{0}  }\right)+\int_{0}^{t}\mathcal{R}^{\epsilon}\left( X_{s}^{\epsilon},Y_{s}^{\epsilon}\right)ds\nonumber\\
&\hspace{0.3cm}
+\sqrt{\epsilon}\int_{0}^{t} \left(\delta\frac{\partial \Phi}{\partial x}\sigma+\frac{\partial \Phi}{\partial y}\tau_{1}\right)\parbar{ X^\eps _s, Y^\eps _s }dW_s+\sqrt{\epsilon}\int_{0}^{t} \frac{\partial \Phi}{\partial y}\tau_{2}\parbar{ X^\eps _s, Y^\eps _s  }dB_s,\nonumber
\end{align*}
where
\[
\mathcal{R}^{\epsilon}(x,y)=\left[\epsilon \frac{\partial \Phi}{\partial x}b+\delta \frac{\partial \Phi}{\partial x}c+\frac{\epsilon\delta}{2}\textrm{tr}\left(\frac{\partial^{2} \Phi}{\partial x^{2}}\sigma\sigma^{T}\right)
+\epsilon\textrm{tr}\left(\frac{\partial \Phi}{\partial x\partial y}\sigma\tau_{1}^{T}\right)\right](x,y).
\]}
\end{enumerate}
\end{lemma}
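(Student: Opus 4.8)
The plan is to treat the two assertions separately: claim (1) is a verification that the centered source term falls within the scope of Theorem \ref{T:RegularityPoisson}, while claim (2) is an application of It\^o's formula to $\Phi$ combined with the defining Poisson equation (\ref{Eq:CellProblemCLT}).

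For claim (1), I would check the hypotheses of Theorem \ref{T:RegularityPoisson} for the source $G(x,y)=\lambda(x,y)-\bar\lambda(x)$. The centering $\int_{\mathcal{Y}}G(x,y)\mu(dy|x)=0$ holds by the very definition $\bar\lambda(x)=\int_{\mathcal{Y}}\lambda(x,y)\mu(dy|x)$. In Regime 2 one has $\lambda=\gamma b+c$, so the H\"older continuity in $y$, the $C^2$ dependence in $x$, and the polynomial growth $\sum_{i=0}^2|\partial_x^i\lambda|\le K(1+|y|^q)$ are all inherited from the corresponding properties of $b$ and $c$ in Condition \ref{A:Assumption1}. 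The $x$-regularity of $\bar\lambda$, hence of the difference, follows from the cited differentiability in $x$ of both $\lambda$ and the invariant measure $\mu(\cdot|x)$ (Theorems 2 and 3 of \cite{PardouxVeretennikov2}); this is precisely where Condition \ref{A:Assumption1} guarantees the smoothness of the right-hand side. Theorem \ref{T:RegularityPoisson} then delivers existence, uniqueness, and the stated growth bounds for $\Phi$, in particular polynomial-in-$y$ growth of $\Phi,\partial_x\Phi,\partial_x^2\Phi,\partial_y\Phi$ and $\partial_{xy}^2\Phi$, which is what legitimizes the It\^o computation below.

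For claim (2), I would apply It\^o's formula to $\Phi(X^\epsilon_t,Y^\epsilon_t)$. Writing $\mathcal{A}^\epsilon$ for the generator of (\ref{Eq:Main}), a direct computation gives
\[
\mathcal{A}^\epsilon\Phi=\frac{\partial\Phi}{\partial x}\Big(\frac{\epsilon}{\delta}b+c\Big)+\frac{\partial\Phi}{\partial y}\Big(\frac{\epsilon}{\delta^2}f+\frac1\delta g\Big)+\frac{\epsilon}{2}\textrm{tr}\Big(\frac{\partial^2\Phi}{\partial x^2}\sigma\sigma^T\Big)+\frac{\epsilon}{2\delta^2}\textrm{tr}\Big(\frac{\partial^2\Phi}{\partial y^2}(\tau_1\tau_1^T+\tau_2\tau_2^T)\Big)+\frac{\epsilon}{\delta}\textrm{tr}\Big(\frac{\partial^2\Phi}{\partial x\partial y}\sigma\tau_1^T\Big).
\]
The crucial step is to isolate the part carrying the generator $\mathcal{L}_x$. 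Collecting the $y$-drift and $y$-second-order contributions, factoring $1/\delta$, and substituting $\epsilon/\delta=\gamma+\theta_2^\epsilon$, these terms equal $\frac1\delta\big[\mathcal{L}_x\Phi+\theta_2^\epsilon\big(\frac{\partial\Phi}{\partial y}f+\frac12\textrm{tr}((\tau_1\tau_1^T+\tau_2\tau_2^T)\frac{\partial^2\Phi}{\partial y^2})\big)\big]$. By the Poisson equation, $\mathcal{L}_x\Phi=-(\lambda-\bar\lambda)$; and writing out $\mathcal{L}_x\Phi=\gamma(\frac{\partial\Phi}{\partial y}f+\frac12\textrm{tr}((\tau_1\tau_1^T+\tau_2\tau_2^T)\frac{\partial^2\Phi}{\partial y^2}))+g\frac{\partial\Phi}{\partial y}$ and solving for the bracket yields the algebraic identity $\frac{\partial\Phi}{\partial y}f+\frac12\textrm{tr}((\tau_1\tau_1^T+\tau_2\tau_2^T)\frac{\partial^2\Phi}{\partial y^2})=-\frac1\gamma(\lambda-\bar\lambda+\frac{\partial\Phi}{\partial y}g)=J-b$, the last equality being the definition (\ref{Eq:Jfunction}) of $J$. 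Thus the fast part of $\mathcal{A}^\epsilon\Phi$ is $\frac1\delta[-(\lambda-\bar\lambda)+\theta_2^\epsilon(J-b)]$. Multiplying the It\^o identity by $\delta$ then produces exactly the claimed deterministic groups: the term $(\frac{\epsilon}{\delta}-\gamma)\int_0^t(J-b)ds$, the boundary term $-\delta(\Phi(X^\epsilon_t,Y^\epsilon_t)-\Phi(X^\epsilon_0,Y^\epsilon_0))$, and the remaining slow terms which, after multiplication by $\delta$, collapse to $\int_0^t\mathcal{R}^\epsilon ds$ with $\mathcal{R}^\epsilon$ exactly as stated; the martingale part of $\delta\,\Phi(X^\epsilon_\cdot,Y^\epsilon_\cdot)$ equals $\sqrt{\epsilon}\int_0^t(\delta\frac{\partial\Phi}{\partial x}\sigma+\frac{\partial\Phi}{\partial y}\tau_1)dW_s+\sqrt{\epsilon}\int_0^t\frac{\partial\Phi}{\partial y}\tau_2\,dB_s$. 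Solving for $\Gamma^{\epsilon,\delta}_t=\int_0^t(\lambda-\bar\lambda)ds$ completes the derivation.

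The main obstacle is twofold. The heavier bookkeeping difficulty lies in the precise tracking of the powers of $\epsilon$ and $\delta$ across the five terms of $\mathcal{A}^\epsilon\Phi$ and in the algebraic identification of the $\theta_2^\epsilon$-coefficient with $J-b$ through the definition (\ref{Eq:Jfunction}), where a sign or a factor of $\gamma$ is easy to misplace. The more conceptual obstacle sits in claim (1): one must be certain that $\lambda-\bar\lambda$ genuinely meets the smoothness demanded by Theorem \ref{T:RegularityPoisson}, i.e.\ that the $x$-regularity lost through averaging against the $x$-dependent measure $\mu(\cdot|x)$ remains compatible with the hypotheses, so that $\partial_x^2\Phi$ and $\partial_{xy}^2\Phi$ exist with polynomial growth and It\^o's formula is legitimately applicable.
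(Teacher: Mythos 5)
Your proposal is correct and follows essentially the same route as the paper: part (i) is verified exactly as in the paper (centering by definition of $\bar{\lambda}$, regularity of $\lambda=\gamma b+c$ from Condition \ref{A:Assumption1}, and $x$-smoothness of $\mu(dy|x)$ from \cite{PardouxVeretennikov2}), and part (ii) is the same It\^o(--Krylov) computation applied to $\Phi(X^{\epsilon}_t,Y^{\epsilon}_t)$, isolating $\frac{1}{\delta}\left[\mathcal{L}_{x}\Phi+\left(\frac{\epsilon}{\delta}-\gamma\right)\left(\frac{\partial\Phi}{\partial y}f+\frac{1}{2}\textrm{tr}\left[\frac{\partial^{2}\Phi}{\partial y^{2}}(\tau_{1}\tau_{1}^{T}+\tau_{2}\tau_{2}^{T})\right]\right)\right]$ and using the Poisson equation together with the identity $\frac{\partial\Phi}{\partial y}f+\frac{1}{2}\textrm{tr}\left[\frac{\partial^{2}\Phi}{\partial y^{2}}(\tau_{1}\tau_{1}^{T}+\tau_{2}\tau_{2}^{T})\right]=J-b$ from (\ref{Eq:Jfunction}), with all powers of $\epsilon$ and $\delta$ tracked correctly. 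The only cosmetic difference is that the paper invokes the It\^o--Krylov formula explicitly (justified by the regularity from part (i)), which your appeal to the polynomial growth bounds on $\Phi$ and its derivatives already covers in substance.
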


\begin{proof}
Part (i). We need to verify that the right hand side of (\ref{Eq:CellProblemCLT}), i.e., $G(x,y)=\lambda(x,y)-\bar{\lambda}(x)$ satisfies the assumptions of Theorem \ref{T:RegularityPoisson}. Keeping in mind that in Regime 2, we have $\lambda(x,y)=\gamma b(x,y)+c(x,y)$, the smoothness and growth conditions are satisfied for $\lambda(x,y)$ due to Condition \ref{A:Assumption1}. For $\bar{\lambda}(x)=\int_{\mathcal{Y}}\lambda(x,y)\mu(dy|x)$ the same is true if the invariant measure $\mu(dy|x)$ is appropriately smooth. By part (iv) of Condition \ref{A:Assumption1}, this follows by Theorem 1 of \cite{PardouxVeretennikov2}.

Part (ii). By part (i) we can apply the  It\^{o}-Krylov formula  to $\Phi(x,y)=\left(\Phi_{1}(x,y),\cdots,\Phi_{m}(x,y)\right)$ with $(x,y)=\left(X^{\epsilon}_{t},Y^{\epsilon}_{t}\right)$. We obtain
\begin{align*}
 \delta\Phi \parbar{ X^\eps _t, Y^\eps _t } &= \delta\Phi \parbar{ X^\eps _0, Y^\eps _0 }+\int_{0}^{t}\mathcal{R}^{\epsilon}\left( X_{s}^{\epsilon},Y_{s}^{\epsilon}\right)ds+\int_{0}^{t}\mathcal{L}_{X^{\epsilon}_{s}}\Phi\left( X_{s}^{\epsilon},Y_{s}^{\epsilon}\right)ds\nonumber\\
&\hspace{0.3cm}+\int_{0}^{t}\left(\frac{\epsilon}{\delta}-\gamma\right)\left(\frac{\partial \Phi}{\partial y}f
+\frac{1}{2}\textrm{tr}\left[\frac{\partial^{2} \Phi}{\partial y^{2}}\left(\tau_{1}\tau_{1}^{T}+\tau_{2}\tau_{2}^{T}\right)\right]\right)\left( X_{s}^{\epsilon},Y_{s}^{\epsilon}\right)ds\nonumber\\
&\hspace{0.3cm}
+\sqrt{\epsilon}\int_{0}^{t} \left(\delta\frac{\partial \Phi}{\partial x}\sigma+\frac{\partial \Phi}{\partial y}\tau_{1}\right)\parbar{ X^\eps _s, Y^\eps _s }dW_s+\sqrt{\epsilon}\int_{0}^{t} \frac{\partial \Phi}{\partial y}\tau_{2}\parbar{ X^\eps _s, Y^\eps _s  }dB_s
\end{align*}
Then, taking into account that $\Phi$ satisfies the PDE (\ref{Eq:CellProblemCLT}) and that by the definition of $J(x,y)$ by (\ref{Eq:Jfunction})
\begin{eqnarray}
\left(\frac{\partial \Phi}{\partial y}f+\frac{1}{2}\textrm{tr}\left[\frac{\partial^{2} \Phi}{\partial y^{2}}\left(\tau_{1}\tau_{1}^{T}+\tau_{2}\tau_{2}^{T}\right)\right]\right)(x,y)&=&\frac{1}{\gamma}\left(\mathcal{L}_{x}\Phi-\frac{\partial \Phi}{\partial y}g\right)(x,y)\nonumber\\
&=&-\frac{1}{\gamma}\left(\lambda(x,y)-\bar{\lambda}(x)+\frac{\partial \Phi}{\partial y}g(x,y)\right)\nonumber\\
&=&J(x,y)-b(x,y)\nonumber
\end{eqnarray}
we get the claim of the lemma.
\end{proof}

Let us then proceed by rewriting the expression for $\Delta^{\epsilon}_{t}=X^{\epsilon}_{t}-\bar{X}_{t}$. Clearly we have that
\begin{align} \notag
\Delta^{\epsilon}_{t}&=\int_{0}^{t}\left[  \frac{\epsilon}{\delta}b\left(  X_{s}^{\epsilon}
, Y_{s}^{\epsilon}\right)  +c\left(  X_{s}^{\epsilon}%
, Y_{s}^{\epsilon}\right)  -\bar{\lambda}( \bar X _ s )\right]  ds  +\sqrt{\epsilon}\int_{0}^{t}\sigma\left(  Y_{t}^{\epsilon}, Y_{s}^{\epsilon}\right)dW_{s}.\label{eqn: DeltaEqn}
\end{align}

Smoothness of $\bar{\lambda}$ implies via Taylor's theorem that
\[
\bar{\lambda}(x_1)=  \bar{\lambda}(x_{2}) +D_x \bar{\lambda}(x_{2}) (x_{1}-x_{2}) + \Lambda[\bar{\lambda}](x_{1},x_{2}), \quad x_{1},x_{2} \in \R^m,
\]
for some function $\Lambda[\bar{\lambda}]$ such that $|x_1 - x_2|^{-2}\Lambda[\bar{\lambda}] (x_1,x_2)$ is locally bounded. Therefore, we obtain

\begin{align*}
\Delta^\epsilon_{t} =& \int_{0}^{t} D_x \bar{\lambda} ( \bar X_s) \Delta^\eps_{s} ds  + \left(\frac{\epsilon}{\delta}-\gamma\right) \int_{0}^{t}b\left(  X_{s}^{\epsilon},Y_{s}^{\epsilon}\right)ds
+ \int_{0}^{t}\left[\lambda  \left(  X_{s}^{\epsilon},Y_{s}^{\epsilon}\right) - \bar{\lambda}(X_{s}^{\epsilon})\right]  ds \\
&\quad + \int_0^t \Lambda[\bar{\lambda}]\parbar{ \bar X_s,X^\eps_s}ds  + \sqrt{\epsilon}\int_{0}^{t}\sigma  \left(X_{s}^{\epsilon},Y_{s}^{\epsilon}\right) dW_{s}
\end{align*}

Hence, by Lemma \ref{L:Representationfor_I} we get that $\eta^{\epsilon}_{t}=\Delta^\epsilon_{t}/\beta^{\epsilon}$ satisfies
\begin{align}
\eta^\epsilon_{t} &=  \int_{0}^{t} D_x \bar{\lambda} ( \bar X_s) \eta^\eps_{s} ds+
\frac{\left(\frac{\epsilon}{\delta}-\gamma\right)}{\beta^{\epsilon}}\int_{0}^{t}J  \left( X_{s}^{\epsilon},Y_{s}^{\epsilon}\right)  ds\label{Eq:DeltaExpression}\\
&\hspace{0.3cm}-\left(\delta/\beta^{\epsilon}\right)\left(\Phi \parbar{ X^\eps _t, Y^\eps _t   }-\Phi \parbar{ X^\eps _0, Y^\eps _0 }\right)+\nonumber\int_{0}^{t}\frac{1}{\beta^{\epsilon}}\mathcal{R}^{\epsilon}\parbar{ X^\eps _s, Y^\eps _s }ds+ \int_0^t \frac{1}{\beta^{\epsilon}}\Lambda[\bar{\lambda}]\parbar{ \bar X_s,X^\eps_s}ds\\
&\hspace{0.3cm}+\frac{\sqrt{\epsilon}}{\beta^{\epsilon}}\int_{0}^{t} \left(\delta\frac{\partial \Phi}{\partial x}\sigma+\sigma+\frac{\partial \Phi}{\partial y}\tau_{1}\right)\parbar{ X^\eps _s, Y^\eps _s }dW_s+\frac{\sqrt{\epsilon}}{\beta^{\epsilon}}\int_{0}^{t} \frac{\partial \Phi}{\partial y}\tau_{2}\parbar{ X^\eps _s, Y^\eps _s  }dB_s\nonumber
  \end{align}

For the sake of presentation, we split the rest of the proof of the theorem in two subsections. In Subsection \ref{SS:TightnessRegime2}, we prove that the family $\left\{\left(X^{\epsilon}_{\cdot},\eta^{\epsilon}_{\cdot}\right),\epsilon>0\right\}$ is relatively compact in $\mathcal{C}\left([0,T];\mathbb{R}^{m}\right)$. Then, in Subsection \ref{SS:Reg2IdentificationLimit} we identify the limit via martingale arguments. Together with uniqueness of the solution to the limiting equation, Theorem \ref{T:CLT2} follows.

\subsection{Tightness}\label{SS:TightnessRegime2}

We prove tightness of the family $\left\{\eta^{\epsilon}_{\cdot},\epsilon>0\right\}$ making use of the characterization of Theorem 8.7 in \cite{Billingsley1968}. This, together with tightness of the process  $\left\{X^{\epsilon}_{\cdot},\epsilon>0\right\}$, which is established in Theorem 3.2 of \cite{Spiliopoulos2012}, implies tightness of the pair $\left\{\left(X^{\epsilon}_{\cdot},\eta^{\epsilon}_{\cdot}\right),\epsilon>0\right\}$. Tightness of  $\left\{\eta^{\epsilon}_{\cdot},\epsilon>0\right\}$ in $C\left([0,T];\mathbb{R}^{m}\right)$  follows if we establish  that there is an $\epsilon_{0}>0$ such that for every $\eta>0$
\begin{enumerate}
\item{There exists $N<\infty$ such that
\begin{equation*}
\mathbb{P}\left[
\sup_{0\leq t\leq T}\left|\eta^{\epsilon}_{t}\right|>N\right] \leq \eta\quad\textrm{ for every }\epsilon\in(0,\epsilon_{0})
\end{equation*}}
\item{and, for every $M<\infty$
\begin{equation*}
\lim_{\rho\downarrow0}\sup_{\epsilon\in(0,\epsilon_{0})}\mathbb{P}\left[
\sup_{|t_{1}-t_{2}|<\rho,0\leq t_{1}<t_{2}\leq T}|\eta^\epsilon_{t_{1}}-\eta^\epsilon_{t_{2}}|\geq\eta, \sup_{0\leq t\leq T}|\eta^\epsilon_{t}|\leq M\right]  =0.
\end{equation*}
}
\end{enumerate}

By Duhamel's principle we can write
\begin{eqnarray}
\eta^\eps_{t}&=&\frac{\sqrt{\eps}}{\beta^{\epsilon}} \Theta^\eps_{x_0} (t)+ \frac{\frac{\epsilon}{\delta}-\gamma}{\beta^{\epsilon}} \Psi_{x_{0}} (t) \int_0^t \left[\Psi_{x_{0}} (s)\right]^{-1}  J\left( X_{s}^{\epsilon},Y_{s}^{\epsilon}\right) ds  \nonumber\\
& &\quad + \frac{1}{\beta^{\epsilon}}R^{\epsilon}(t;\Psi) +\Psi_{x_{0}} (t)\int_0^t \left[\Psi_{x_{0}} (s)\right]^{-1}\frac{1}{\beta^{\epsilon}}\Lambda[\bar{\lambda}]\parbar{ \bar X_s,X^\eps_s}ds,\label{eqn: Duhamel}
\end{eqnarray}
where

\begin{align} \notag
\Theta^{\epsilon}_{x_0} (t)&=    \Psi_{x_{0}} (t) \int_{0}^{t} \left[\Psi_{x_{0}} (s)\right]^{-1} \left(\delta\frac{\partial \Phi}{\partial x}\sigma+\sigma+\frac{\partial \Phi}{\partial y}\tau_{1}\right)\parbar{ X^\eps _s, Y^\eps _s } dW_{s}\nonumber\\
&\quad+  \Psi_{x_{0}} (t) \int_{0}^{t} \left[\Psi_{x_{0}} (s)\right]^{-1}\frac{\partial \Phi}{\partial y}\tau_{2}\parbar{ X^\eps _s, Y^\eps _s  }dB_s,\label{eqn: thetadef}
\end{align}
and
\begin{align*}
R^{\epsilon}(t;\Psi)&=-\delta\Psi_{x_{0}} (t)\left(\Phi \parbar{ X^\eps _t,  Y^\eps _t  }-\Phi \parbar{ X^\eps _0, Y^\eps _0  }\right)+\Psi_{x_{0}} (t)\int_{0}^{t}\left[\Psi_{x_{0}} (s)\right]^{-1}\mathcal{R}^\eps \parbar{ X^\eps _s, Y^\eps _s  }ds\nonumber
%&\quad+\delta\sqrt{\epsilon}\Psi_{x_{0}} (t)\int_{0}^{t}\left[\Psi_{x_{0}} (s)\right]^{-1} \frac{\partial \Phi}{\partial x}\sigma\parbar{ X^\eps _s, Y^\eps _s }dW_{s}.\nonumber
\end{align*}

The next step is to show that the third and the fourth term   on the right hand side of (\ref{eqn: Duhamel}) vanish in an appropriate way as $\epsilon\downarrow 0$.
To do so, notice that the process $\hat{Y}^{\epsilon}_{t}=Y^{\epsilon}_{\epsilon t}$ satisfies
\begin{eqnarray}
d\hat{Y}^{\epsilon}_{s}&=&\left[  \left(\frac{\epsilon}{\delta}\right)^{2}f\left(  X^{\epsilon}_{\epsilon s}%
,\hat{Y}^{\epsilon}_{s}\right)  + \frac{\epsilon}{\delta} g\left(  X^{\epsilon}_{\epsilon s}%
,\hat{Y}^{\epsilon}_{s}\right)\right] ds+\frac{\epsilon}{\delta}\left[
\tau_{1}\left(  X^{\epsilon}_{\epsilon s},\hat{Y}^{\epsilon}_{s}\right)
dW^{\epsilon}_{s}+\tau_{2}\left(  X^{\epsilon}_{\epsilon s},\hat{Y}^{\epsilon}_{s}\right)dB^{\epsilon}_{s}\right], \nonumber\\
\hat{Y}^{\epsilon}_{0}&=y_{0}\nonumber
\end{eqnarray}

where $W^{\epsilon}_{t}=W^{\epsilon}_{\epsilon t}$ and $B^{\epsilon}_{t}=B^{\epsilon}_{\epsilon t}$. This means that the law of $\hat{Y}^{\epsilon}_{s}$ is asymptotically identical to the law of a process corresponding to the operator $\gamma\mathcal{L}_{x}$. By Condition \ref{A:Assumption1} such a process has bounded moments. In particular, if $\bar{Y}_{t}(x)$ is the process corresponding to the operator $\gamma\mathcal{L}_{x}$, then Condition \ref{A:Assumption1} guarantees that
\[
\sup_{t\in[0,T]}\mathbb{E}_{y_{0}}|\bar{Y}_{t}(x)|^{q}\leq K(x)\left(1+|y_{0}|^{q}\right),
\]

where $K(x)$ is bounded with respect to $x$. By the definition of $\beta^{\epsilon}$, we  have that
\begin{eqnarray}
\left(\delta/\beta^{\epsilon}\right)\mathbb{E}\sup_{t\in[0,T]}\left[|\Phi\parbar{ X^\eps _t,  Y^\eps _t  }|+|\Phi \parbar{ x_0, y_0  }|\right]&\leq & \left(\delta/\beta^{\epsilon}\right) C \mathbb{E}\sup_{t\in[0,T]}\left[1+ |Y^\eps _t|^{q}\right]\nonumber\\
&\leq& \left(\delta/\beta^{\epsilon}\right) C \mathbb{E}\sup_{t\in[0,T]}\left[1+ |\hat{Y}^\eps_{t/\epsilon}|^{q}\right] \rightarrow 0, \textrm{ as } \epsilon\downarrow 0
\end{eqnarray}

The latter limit follows from the estimate (which is obtained analogously to Proposition 2 in \cite{PardouxVeretennikov1})
\[
\mathbb{E}_{y_{0}}\sup_{t\in[0,T]}\left|\hat{Y}^{\epsilon}_{t/\epsilon}\right|=o(1/\sqrt{\epsilon}) \textrm{ as }\epsilon\downarrow 0.
\]

Moreover, Theorem 2.5  guarantees that there is a $q$ such that
\begin{equation}
\left|\Phi(x,y)\right|+\left|\frac{\partial \Phi(x,y)}{\partial x}\right|+\left|\frac{\partial^{2} \Phi(x,y)}{\partial x^{2}}\right|+\left|\frac{\partial^{2} \Phi(x,y)}{\partial x\partial y}\right|\leq C \left(1+|y|^{q}\right)\label{Eq:Reg2DerivativesCondition}
\end{equation}

Consider now $\Xi(x,y)$ to be any of these functions
\begin{equation}
\textrm{i.e. } \Xi=\frac{\partial \Phi}{\partial x}b, \frac{\partial \Phi}{\partial x}c, \textrm{tr}\left(\frac{\partial^{2} \Phi}{\partial x^{2}}\sigma\sigma^{T}\right) \textrm{ or } \Xi=\textrm{tr}\left(\frac{\partial^{2} \Phi}{\partial x\partial y}\sigma\tau^{T}_{1}\right).\label{Eq:PsiTerms}
\end{equation}

Notice that these functions are the building blocks of  $\mathcal{R}^{\epsilon}(x,y)$ defined in Lemma \ref{L:Representationfor_I}. Let us define,
\begin{equation}
\theta^{\epsilon}(x,y)=\frac{\sqrt{\epsilon}}{\delta}\frac{\left|(\tau_{1}\tau_{1}^{T}(x,y)+\tau_{2}\tau_{2}^{T}(x,y))^{1/2}y\right|}{|y|}\label{Eq:TimeTrasnformation}
\end{equation}
and set $\varphi^{\epsilon}_{t}=\int_{0}^{t}\left|\theta^{\epsilon}\left(X^{\epsilon}_{s},Y^{\epsilon}_{s}\right)\right|^{2}ds$ and $\zeta^{\epsilon}_{t}=\left(\varphi^{\epsilon}_{t}\right)^{-1}$. If, we define
$\tilde{Y}^{\epsilon}_{t}=Y^{\epsilon}_{\zeta^{\epsilon}_{t}}$, then by Proposition $1$ in \cite{PardouxVeretennikov1}, we obtain that
\[
\mathbb{E}_{y_{0}}|\tilde{Y}^{\epsilon}_{t}|^{q}\leq C\left(1+|y_{0}|^{q}\right)
\]
for $\epsilon$ sufficiently small. Therefore, we have
\begin{eqnarray}
\mathbb{E}\sup_{t\in[0,T]}\int_{0}^{T}\left|\Xi\left(X^{\epsilon}_{s}, Y^{\epsilon}_{s}\right)\right|ds&\leq& C_{0}\mathbb{E}\int_{0}^{T}\left(1+\left|Y^{\epsilon}_{s}\right|^{q}\right)ds\nonumber\\
&\leq& C_{1} \mathbb{E}\int_{0}^{C_{1}T}\left(1+\left|\tilde{Y}^{\epsilon}_{s}\right|^{q}\right)ds\nonumber\\
&\leq& C_{2}T(1+|y_{0}|^{q})\nonumber
\end{eqnarray}
where $C_{i}$ are constants that depend on the bounds of the coefficients by Condition \ref{A:Assumption1}. The last computations, and the definition of $\beta^{\epsilon}$ imply then that
\begin{equation}
\lim_{\epsilon\downarrow 0}\mathbb{E}\sup_{t\in[0,T]}\left(\frac{1}{\beta^{\epsilon}}R^{\epsilon}(t;\Psi)\right)^{2}=0.\label{Eq:Reg2Term1goesToZero}
\end{equation}

Next we treat the fourth term in (\ref{eqn: Duhamel}). We want to prove that the process
\[
Q^{\epsilon}[\bar{\lambda};\Psi]_{t}=\Psi_{x_{0}} (t)\int_0^t \left[\Psi_{x_{0}} (s)\right]^{-1}\frac{1}{\beta^{\epsilon}}\Lambda[\bar{\lambda}]\parbar{ \bar X_s,X^\eps_s}ds
\]

converges to zero uniformly on $[0,T]$ in probability as $\epsilon\downarrow 0$.  %We claim that it is enough to prove that $\lim_{\epsilon\downarrow 0}\mathbb{P}\left[\tau^{\epsilon}<T\right]=0$, where
Let us define
\[
\tau^{\epsilon}=\inf\{ t>0: \left|X^{\epsilon}_{t}-\bar{X}_{t}\right|>\left|\beta^{\epsilon}\right|^{\rho}\}, \textrm{ for } \rho\in(1/2,1)
\]

The quadratic decay of $\Lambda[\bar{\lambda}]$ and $\rho>1/2$ imply that
\begin{equation}
\mathbb{E}\sup_{0\leq t\leq T\wedge \tau^{\epsilon}}|Q^{\eps}[\bar{\lambda};\Psi]_{t}|\rightarrow 0, \textrm{ as }\epsilon\downarrow 0.\label{Eq:Reg2ReminderTermGoesToZero}
\end{equation}

Hence it is enough to prove that $\lim_{\epsilon\downarrow 0}\mathbb{P}\left[\tau^{\epsilon}<T\right]=0$.  For this purpose, we notice that
for $\tau^\eps < T$ we have by (\ref{Eq:DeltaExpression}),
\begin{align*}
1 &= (\beta^\eps)^{1 - \rho }\sup_{0\leq t\leq T\wedge \tau^{\epsilon} } | \eta^\eps_{t} |\nonumber\\
&\leq (\beta^\eps)^{1 - \rho }\left[\sup_{0\leq t\leq T\wedge \tau^{\epsilon} } \left| \frac{\sqrt{\eps}}{\beta^{\epsilon}} \Theta^\eps_{x_0} (t) \right|+\sup_{0\leq t\leq T\wedge \tau^{\epsilon} } \left| \frac{\frac{\epsilon}{\delta}-\gamma}{\beta^{\epsilon}} \Psi_{x_{0}} (t) \int_0^t \left[\Psi_{x_{0}} (s)\right]^{-1}  J\left( X_{s}^{\epsilon},Y_{s}^{\epsilon}\right) ds \right|\right.\nonumber\\
&\qquad\qquad\left.+\sup_{0\leq t\leq T\wedge \tau^{\epsilon} } \left| \frac{1}{\beta^{\epsilon}}R^{\epsilon}(t;\Psi) \right|+\sup_{0\leq t\leq T\wedge \tau^{\epsilon} } \left| Q^{\eps}[\bar{\lambda};\Psi]_{t} \right|\right]\nonumber\\
&  = (\beta^\eps)^{1-\rho} C_1^{\epsilon},
\end{align*}
where $C_1^{\epsilon}$ is the random variable in the bracket. By the definition of $\beta^\eps$, tightness of $\Theta^\eps_{x_0} (t)$ and of $\Psi_{x_{0}} (t) \int_0^t \left[\Psi_{x_{0}} (s)\right]^{-1}  J\left( X_{s}^{\epsilon},Y_{s}^{\epsilon}\right) ds$, (\ref{Eq:Reg2Term1goesToZero}), (\ref{Eq:Reg2ReminderTermGoesToZero}) and because $\rho<1$, we obtain that the right hand side of the last display converges to zero in probability as $\epsilon,\delta\downarrow 0$. Hence, the claim $\lim_{\epsilon\downarrow 0}\mathbb{P}\left[\tau^{\epsilon}<T\right]=0$ follows. Therefore, we have shown 
\begin{equation}
\sup_{t\in[0,T]}\left|Q^{\eps}[\bar{\lambda};\Psi]_{t}\right|\rightarrow 0, \textrm{ in probability as }\epsilon\downarrow 0. \label{Eq:Reg2Term2goesToZero}
\end{equation}

Therefore, by (\ref{Eq:Reg2Term1goesToZero}) and (\ref{Eq:Reg2Term2goesToZero}) we have that the third and the fourth term of  (\ref{eqn: Duhamel}) converge to zero as $\epsilon\downarrow 0$.

Next it remains to consider the first and the second term on the right hand side of (\ref{eqn: Duhamel}). These terms do not vanish, but are bounded.

Let us first consider the first term on the right hand side of (\ref{eqn: Duhamel}), i.e., the term $\Theta^{\epsilon}_{x_{0}}(t)$.  By Doob's inequality for the martingale terms of $\Theta^{\epsilon}_{x_{0}}(t)$ and Theorem \ref{T:RegularityPoisson}, we have that

\begin{equation}
\mathbb{E}\sup_{t\in[0,T]}\left|\int_{0}^{t}\Xi_{s}\left(X^{\epsilon}_{s}, Y^{\epsilon}_{s}\right)dZ_{s}\right|^{2}\leq CT(1+|y_{0}|^{q})\label{Eq:XiStochasticIntegralTerms}
\end{equation}
where $Z_{\cdot}=W_{\cdot}$ or $B_{\cdot}$ and $\Xi=\left[\Psi_{x_{0}} (s)\right]^{-1} \sigma\frac{\partial \Phi}{\partial x}, \left[\Psi_{x_{0}} (s)\right]^{-1} \tau_{1}\frac{\partial \Phi}{\partial y}$ or $\left[\Psi_{x_{0}} (s)\right]^{-1} \tau_{2}\frac{\partial \Phi}{\partial y}$.

Similarly we can also bound the integrands of the second term on the right hand side of (\ref{eqn: Duhamel}). These estimates show that there exists $\epsilon_{0}>0$ small enough such that for every $\epsilon<\epsilon_{0}$

\[
\sup_{\epsilon\in(0,\epsilon_{0})}\mathbb{E}_{x_{0},y_{0}}\sup_{t\in[0,T]}|\eta^{\epsilon}_{t}|<\infty
\]
which implies part (i) of the requirements for tightness. In order to prove part (ii) of the requirements for tightness we define the random time
\[
\sigma^{\epsilon,M}=\inf\left\{  t\geq 0: |\eta^{\epsilon}_{t}|\geq M\right\}.
\]

So it suffices to show that for every $\eta$ and $M$ there exists $\epsilon_{0}$ and $\rho>0$ such that
\begin{equation*}
\sup_{\epsilon\in(0,\epsilon_{0})}\mathbb{P}\left[
\sup_{|t_{1}-t_{2}|<\rho,0\leq t_{1}<t_{2}\leq T\wedge \sigma^{\epsilon,M}}|\eta^\epsilon_{t_{1}}-\eta^\epsilon_{t_{2}}|\geq \eta\right] \leq \eta\rho.
\end{equation*}

This follows in a standard way by bounding the integrals that appear on the right side of the expression for $\eta^{\epsilon}_{t_{2}\wedge\sigma^{\epsilon,M}}-\eta^{\epsilon}_{t_{1}\wedge\sigma^{\epsilon,M}}$ based on (\ref{eqn: Duhamel}). In particular, by writing out $\eta^{\epsilon}_{t_{2}\wedge\sigma^{\epsilon,M}}-\eta^{\epsilon}_{t_{1}\wedge\sigma^{\epsilon,M}}$, we get an expression that involves integrals of the form $\int_{t_{1}\wedge \sigma^{\epsilon,M}}^{t_{2}\wedge \sigma^{\epsilon,M}}\left[\Psi_{x_{0}} (s)\right]^{-1}\Xi\left(X^{\epsilon}_{s}, Y^{\epsilon}_{s}\right) ds$, where $\Xi$ is any of the functions (\ref{Eq:PsiTerms}) and $J(x,y)$, and stochastic integrals of the form (\ref{Eq:XiStochasticIntegralTerms}). Using the change of time implied by (\ref{Eq:TimeTrasnformation}) and setting $\left(\tilde{X}^{\epsilon}_{t},\tilde{Y}^{\epsilon}_{t}\right)=\left(X^{\epsilon}_{\zeta^{\epsilon}_{t}},Y^{\epsilon}_{\zeta^{\epsilon}_{t}}\right)$, we obtain, similarly to the computations for part (i) of the tightness requirements, that if $\Xi(x,y)$ is any of the functions in (\ref{Eq:PsiTerms}), then
\begin{eqnarray}
\mathbb{E}\sup_{t\in[t_{1},t_{1}+\rho]}\int_{t_{1}\wedge \sigma^{\epsilon,M}}^{t\wedge \sigma^{\epsilon,M}}\left|\left[\Psi_{x_{0}} (s)\right]^{-1}\Xi\left(X^{\epsilon}_{s}, Y^{\epsilon}_{s}\right)\right|^{1+\nu} ds&\leq& C\rho^{\nu} \mathbb{E}\int_{t_{1}\wedge \sigma^{\epsilon,M}}^{(t_{1}+\rho)\wedge\sigma^{\epsilon,M}}\left(1+\left|\tilde{Y}^{\epsilon}_{s}\right|^{q(\nu)}\right)ds\nonumber\\
&\leq& C \rho^{1+\nu} (1+|y_{0}|^{q(\nu)})\nonumber
\end{eqnarray}
for sufficiently small $\nu>0$ and $q(\nu)$ a constant that depends on $q$ and $\nu$. Similar computations also hold for the stochastic integrals based on Doob's inequality. We omit the rest of the details.

From these considerations, tightness of the family $\{\eta^{\eps},\eps>0\}$ is being established.

\subsection{Identification of the limit}\label{SS:Reg2IdentificationLimit}

We identify the limit using the martingale problem formulation. For this purpose we apply It\^{o} formula to a function $\phi\in C^{2}_{b}(\mathbb{R}^{m})$ with process

\[
\psi^{\epsilon}_{t}=\eta^{\epsilon}_{t}+\left(\delta/\beta^{\epsilon}\right)\left(\Phi \parbar{ X^\eps _t,  Y^\eps _t  }-\Phi \parbar{ X^\eps _0, Y^\eps _0  }\right)
\]
We get

\begin{align}
\phi(\psi^\epsilon_{t}) &=   \int_{0}^{t} D_x \bar{\lambda} ( \bar X_s) \eta^\eps_{s} D \phi(\psi^{\eps}_{s}) ds+
\frac{\left(\frac{\epsilon}{\delta}-\gamma\right)}{\beta^{\epsilon}}\int_{0}^{t}J \left( X_{s}^{\epsilon},Y_{s}^{\epsilon}\right)  D \phi(\psi^{\epsilon}_{s}) ds\nonumber\\
&\hspace{0.3cm}+\int_{0}^{t}\frac{1}{\beta^{\epsilon}}\left[\mathcal{R}^{\epsilon}\parbar{ X^\eps _s,Y_{s}^{\epsilon}  }+\Lambda[\bar{\lambda}]\parbar{ \bar X_s,X^\eps_s}\right]D \phi(\psi^{\epsilon}_{s})ds+\nonumber\\
&\hspace{0.3cm}+\left(\frac{\sqrt{\epsilon}}{\beta^{\epsilon}}\right)^{2}\frac{1}{2}
\int_{0}^{t}\textrm{tr}\left[ D^{2} \phi(\psi^{\epsilon}_{s}) \left(\delta\frac{\partial \Phi}{\partial x}\sigma+\sigma+\frac{\partial \Phi}{\partial y}\tau_{1}\right)\left(\delta\frac{\partial \Phi}{\partial x}\sigma+\sigma+\frac{\partial \Phi}{\partial y}\tau_{1}\right)^{T} \right]\parbar{ X^\eps _s,Y_{s}^{\epsilon}  }ds\nonumber\\
&\hspace{0.3cm}+\left(\frac{\sqrt{\epsilon}}{\beta^{\epsilon}}\right)^{2}\frac{1}{2}
\int_{0}^{t}\textrm{tr}\left[ D^{2} \phi(\psi^{\epsilon}_{s}) \left(\frac{\partial \Phi}{\partial y}\tau_{2}\right)\left(\frac{\partial \Phi}{\partial y}\tau_{2}\right)^{T} \right]\parbar{ X^\eps _s,Y_{s}^{\epsilon}  }ds\nonumber\\
&\hspace{0.3cm}+\frac{\sqrt{\epsilon}}{\beta^{\epsilon}}\int_{0}^{t} D \phi(\psi^{\epsilon}_{s}) \left(\delta\frac{\partial \Phi}{\partial x}\sigma+\sigma+\frac{\partial \Phi}{\partial y}\tau_{1}\right)\parbar{ X^\eps _s,Y_{s}^{\epsilon}  }dW_s\nonumber\\
&\hspace{0.3cm}+\frac{\sqrt{\epsilon}}{\beta^{\epsilon}}\int_{0}^{t} D \phi(\psi^{\epsilon}_{s}) \left(\frac{\partial \Phi}{\partial y}\tau_{2}\right)\parbar{ X^\eps _s,Y_{s}^{\epsilon}  }dB_s
\label{Eq:PhiExpression}
\end{align}

We have two cases to consider, depending on whether $\ell\neq 0$ or $\ell=0$.

Let us first assume that $\ell=\lim_{\epsilon\downarrow 0}\frac{\sqrt{\epsilon}}{\left(\frac{\epsilon}{\delta}-\gamma\right)}\neq 0$. In this case $\beta^{\epsilon}=\sqrt{\epsilon}$ and the result follows if we prove that for any $0\leq s\leq t\leq T$
\begin{eqnarray}
& &\lim_{\epsilon\downarrow 0}\mathbb{E}\left[\phi(\eta^{\epsilon}_{t})-\phi(\eta^{\epsilon}_{s})-\int_{s}^{t}\left[\left(D_x \bar{\lambda} ( \bar X_r) \eta^\eps_{r}+\ell^{-1}\bar{J} \left( X_{r}^{\epsilon}\right)\right)D \phi(\eta^{\eps}_{r})\right.\right.\nonumber\\
& &\hspace{5.3cm}\left.\left.+ \frac{1}{2}\textrm{tr}\left[ D^{2} \phi(\eta^{\epsilon}_{r}) q\left( X_{r}^{\epsilon}\right) \right]\right]dr\Big | \mathcal{F}_{s}\right]=0\label{Eq:Martingale}
\end{eqnarray}

For this purpose, we first notice that, as in the proof of tightness,
\begin{equation}
\left(\delta/\beta^{\epsilon}\right)\mathbb{E}\sup_{t\in[0,T]}\left[|\Phi\parbar{ X^\eps _t,  Y^\eps _t  }|+|\Phi \parbar{ x_0, y_0  }|\right]\rightarrow 0, \textrm{ as } \epsilon\downarrow 0\label{Eq:XiTerm1}
\end{equation}
and
\begin{equation*}
\lim_{\epsilon\downarrow 0}\mathbb{E}\left[\sup_{t\in[0,T]} \int_{0}^{t}\frac{1}{\beta^{\epsilon}}\left|\mathcal{R}^{\epsilon}\parbar{ X^\eps _s,Y_{s}^{\epsilon}  }D \phi(\psi^{\epsilon}_{s})\right|ds+ \int_{0}^{t}\frac{1}{\beta^{\epsilon}}\Lambda[\bar{\lambda}]\parbar{ \bar X_s,X^\eps_s}D \phi(\psi^{\epsilon}_{s})ds\right]=0
\end{equation*}

Moreover, the stochastic integrals in (\ref{Eq:PhiExpression}) are square integrable. This follows from Doob's inequality and Theorem \ref{T:RegularityPoisson}. Thus, their expected value vanishes in the prelimit.

Next notice that by construction $\frac{\left(\frac{\epsilon}{\delta}-\gamma\right)}{\sqrt{\epsilon}}\rightarrow \ell^{-1}$. So, thanks to (\ref{eqn: Duhamel}),(\ref{Eq:PhiExpression}) and (\ref{Eq:XiTerm1}), it essentially remains to prove that
\begin{equation}
\lim_{\epsilon\downarrow 0}\mathbb{E}\left[\int_{s}^{t}J \left( X_{r}^{\epsilon},Y_{r}^{\epsilon}\right)  D \phi(\eta^{\epsilon}_{r}) dr-\int_{s}^{t}\bar{J} \left( X_{r}^{\epsilon}\right)D \phi(\eta^{\eps}_{r})dr\Big | \mathcal{F}_{s}\right]=0\label{Eq:Regime2LimitIdentTerm1}
\end{equation}
and
\begin{eqnarray}
& &\lim_{\epsilon\downarrow 0}\mathbb{E}\left[
\int_{s}^{t}\textrm{tr}\left[ D^{2} \phi(\eta^{\epsilon}_{r}) \left(\sigma+\frac{\partial \Phi}{\partial y}\tau_{1}\right)\left(\sigma+\frac{\partial \Phi}{\partial y}\tau_{1}\right)^{T}+ D^{2} \phi(\eta^{\epsilon}_{r}) \left(\frac{\partial \Phi}{\partial y}\tau_{2}\right)\left(\frac{\partial \Phi}{\partial y}\tau_{2}\right)^{T}  \right]\parbar{ X^\eps _r,Y_{r}^{\epsilon}  }dr\right.\nonumber\\
& &\hspace{9cm} \left.
-\int_{s}^{t}\textrm{tr}\left[ D^{2} \phi(\eta^{\epsilon}_{r}) q\left( X_{r}^{\epsilon}\right) \right]dr\Big | \mathcal{F}_{s}\right]=0\label{Eq:Regime2LimitIdentTerm2}
\end{eqnarray}

Due to tightness of the pair $\{(X^{\epsilon},\eta^{\epsilon}),\epsilon>0\}$ there is a subsequence that converges weakly to a process $(\bar{X},\eta)$. To prove that (\ref{Eq:Regime2LimitIdentTerm1}) and (\ref{Eq:Regime2LimitIdentTerm2}) hold we use the standard idea of freezing the slow component $X^{\eps}_{\cdot}$, see for example \cite{FWBook,PardouxVeretennikov2}, and the ergodic theorem. The details are omitted. This concludes the proof for the case $\ell\neq 0$.

We finally consider the case $\ell=0.$ Here the limiting process $\bar{\eta}_{\cdot}$ is deterministic. Convergence will follow if we prove that

\begin{equation*}
 \lim_{\epsilon\downarrow 0}\mathbb{E}\left[\phi(\eta^{\epsilon}_{t})-\phi(\eta^{\epsilon}_{s})-\int_{s}^{t}\left[\left(D_x \bar{\lambda} ( \bar X_r) \eta^\eps_{r}+\bar{J} \left( X_{r}^{\epsilon}\right)\right)D \phi(\eta^{\eps}_{r})\right]dr\Big | \mathcal{F}_{s}\right]=0\label{Eq:Martingale2}
\end{equation*}
This follows by arguments very similar to those of the previous case with $\ell\neq 0$.

\section{Proof of Theorem \ref{T:CLT2} for Regime 1.}\label{S:ProofRegime2}
In this section we consider Regime 1, i.e. we assume that $\epsilon/\delta\rightarrow \infty$ as $\epsilon\downarrow 0$. As in Regime 2, we omit the subscript $1$ from the functions $\lambda,J,q$ and measure $\mu$. The situation here is more complex than in Regime 2, due to the unclear behavior of the integral term $\frac{\epsilon}{\delta}\int_{0}^{t}b\left(  X^{\epsilon}_{s}%
,Y^{\epsilon}_{s}\right) ds$. To go around this we consider a function $\chi=(\chi_{1},\ldots,\chi_{m})$, which grows at most polynomially in $y$ as $|y|\rightarrow \infty$,  and satisfies the Poisson equation
\begin{equation}
\mathcal{L}_{x}^{1}\chi_{l}(x,y)=-b_{l}(x,y),\quad\int_{\mathcal{Y}}%
\chi_{l}(x,y)\mu(dy|x)=0,\hspace{0.1cm} l=1,...,m.\nonumber
\end{equation}
By applying  It\^{o}-Krylov's formula to $\chi(x,y)=(\chi_{1}(x,y
),\ldots,\chi_{m}(x,y))$ with $(x,y)=(X_{t}^{\epsilon},Y_{t}^{\epsilon})$, we can reduce the problem to the previous case.  Note that by Condition \ref{A:Assumption1}, Theorem \ref{T:RegularityPoisson} applies and thus $\chi$ has the required regularity. By doing so, we can rewrite the first component of (\ref{Eq:Main}), omitting function arguments in some places for notational convenience,  as

\begin{eqnarray}
X^{\epsilon}_{t}&=&  x_{0}+\int_{0}^{t}\lambda\left(  X^{\epsilon}_{s},Y^{\epsilon}_{s}\right) ds- \delta \left( \chi \parbar{ X^\eps _t, Y^{\epsilon}_{t}  }- \chi \parbar{ x_{0}, y_{0} }\right)\nonumber\\
& &\quad +\int_{0}^{t}\left(\epsilon \frac{\partial \chi}{\partial x}b+\delta\frac{\partial \chi}{\partial x}c+\frac{\epsilon\delta}{2}\textrm{tr}\left[\sigma\sigma^{T}\frac{\partial^{2} \chi}{\partial x^{2}}\right]+\epsilon\textrm{tr}\left[\sigma\tau_{1}^{T}\frac{\partial^{2} \chi}{\partial x\partial y}\right]\right)\left(  X^{\epsilon}_{s},Y^{\epsilon}_{s}\right)   ds\nonumber\\
& &\quad+\sqrt{\epsilon}\int_{0}^{t}
\left(\sigma+\frac{\partial \chi}{\partial y}\tau_{1}+\delta\frac{\partial \chi}{\partial x}\sigma \right)\left(  X^{\epsilon}_{s},Y^{\epsilon}_{s}\right)
dW_{s}+\sqrt{\epsilon}\int_{0}^{t}\frac{\partial \chi}{\partial y}\tau_{2}\left(  X^{\epsilon}_{s},Y^{\epsilon}_{s}\right)dB_{s}\nonumber
\end{eqnarray}

Then, as in the case of Regime $2$, we obtain that $\Delta^{\epsilon}_{t}=X^{\epsilon}_{t}-\bar{X}_{t}$ satisfies

\begin{align*}
 \Delta^{\epsilon}_{t} &= \int_{0}^{t}D_x \bar{\lambda} ( \bar X_s) \Delta^\eps_{s}ds
+ \int_{0}^{t}\parbar{ \lambda \parbar{ X^\eps _s, Y^{\epsilon}_{s} }  - \bar{\lambda} \left( X^\eps _s\right)   }ds
 \\
& \quad +\sqrt{\epsilon} \int_{0}^{t}\left(\sigma+\frac{\partial \chi}{\partial y} \tau_{1}+\delta \frac{\partial \chi}{\partial x} \sigma \right)\parbar{ X^\eps _s, Y^{\epsilon}_{s} } dW_{s} +
\sqrt{\epsilon}\int_{0}^{t}\frac{\partial \chi}{\partial y}\tau_{2}\left(  X^{\epsilon}_{s},Y^{\epsilon}_{s}\right)dB_{s}\\
&\quad+\int_{0}^{t}\mathcal{R}^\eps_{1}\parbar{ X^\eps _s, Y^{\epsilon}_{s} }ds
+\int_{0}^{t}\Lambda[{\lambda}_{1}]\parbar{ \bar X_s,X^\eps_s}dt- \delta \left( \chi \parbar{ X^\eps _t, Y^{\epsilon}_{t}  }- \chi \parbar{ x_{0}, y_{0} }\right)
\end{align*}
where

\[
\mathcal{R}^\eps_{1}\parbar{x,y}=\left(\epsilon \frac{\partial \chi}{\partial x}b+\delta\frac{\partial \chi}{\partial x}c+\frac{\epsilon\delta}{2}\textrm{tr}\left[\sigma\sigma^{T}\frac{\partial^{2} \chi}{\partial x^{2}}\right]+\epsilon\textrm{tr}\left[\sigma\tau_{1}^{T}\frac{\partial^{2} \chi}{\partial x\partial y}\right]\right)\left(x,y\right)
\]

Next, we need to understand the behavior of the correction term

\[
\Gamma^{\epsilon,\delta}_{t}=\int_{0}^{t}\parbar{ \lambda \parbar{ X^\eps _s, Y^{\epsilon}_{s} }  - \bar{\lambda} \left( X^\eps _s\right)   }ds
\]

Recall that $\Phi$ satisfies the Poisson equation (\ref{Eq:CellProblemCLT}) with $\lambda=\lambda_{1}$. We have the following lemma, which is exactly analogous to Lemma \ref{L:Representationfor_I} for Regime $2$.

\begin{lemma}\label{L:Representationfor_II}
Assume Conditions \ref{A:Assumption1} and \ref{A:LyapunovCondition}. The following hold
\begin{enumerate}
\item{The solution to the Poisson equation (\ref{Eq:CellProblemCLT}) satisfies the conclusions of Theorem \ref{T:RegularityPoisson}.}
\item{For every $\epsilon,\delta>0$ we have the representation

\begin{align*}
\Gamma^{\epsilon,\delta}_{t}&=\int_{0}^{t}\left[\lambda  \left( X_{s}^{\epsilon},Y_{s}^{\epsilon}\right) - \bar{\lambda}(X_{s}^{\epsilon})\right]  ds\nonumber\\
&=
\frac{\delta}{\epsilon}\int_{0}^{t}J\left( X_{s}^{\epsilon},Y_{s}^{\epsilon}\right)  ds-\frac{\delta^{2}}{\epsilon}\left(\Phi\parbar{ X^\eps _t, Y_{s}^{\epsilon}  }-\Phi\parbar{ X^{\epsilon}_0, Y^{\epsilon}_{0}  }\right)+\int_{0}^{t}\mathcal{R}^{\epsilon}_{2}\left( X_{s}^{\epsilon},Y_{s}^{\epsilon}\right)ds\nonumber\\
&\hspace{0.3cm}
+\frac{\delta}{\sqrt{\epsilon}}\int_{0}^{t} \left(\delta\frac{\partial \Phi}{\partial x}\sigma+\frac{\partial \Phi}{\partial y}\tau_{1}\right)\parbar{ X^\eps _s, Y^\eps _s }dW_s+\frac{\delta}{\sqrt{\epsilon}}\int_{0}^{t} \frac{\partial \Phi}{\partial y}\tau_{2}\parbar{ X^\eps _s, Y^\eps _s  }dB_s\nonumber
\end{align*}
where
\[
\mathcal{R}^{\epsilon}_{2}(x,y)=\left[\delta \frac{\partial \Phi}{\partial x}b+\frac{\delta^{2}}{\epsilon} \frac{\partial \Phi}{\partial x}c+\frac{\delta^{2}}{2}\textrm{tr}\left(\sigma\sigma^{T}\frac{\partial^{2} \Phi}{\partial x^{2}}\right)
+\delta\textrm{tr}\left(\sigma\tau_{1}^{T}\frac{\partial^{2} \Phi}{\partial x\partial y}\right)\right](x,y).
\]}
\end{enumerate}
\end{lemma}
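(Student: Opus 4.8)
The plan is to follow verbatim the two-step strategy of Lemma \ref{L:Representationfor_I}, adjusting only the bookkeeping to the scalings of Regime 1. Part (i) is a regularity check that places the right-hand side of (\ref{Eq:CellProblemCLT}) under Theorem \ref{T:RegularityPoisson}; Part (ii) is the It\^{o}--Krylov expansion of $\Phi$ along $(X^{\epsilon},Y^{\epsilon})$, rearranged to isolate $\Gamma^{\epsilon,\delta}_{t}$.

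For Part (i), I would check that $G(x,y)=\lambda(x,y)-\bar{\lambda}(x)$ meets the hypotheses of Theorem \ref{T:RegularityPoisson}, where now $\lambda=\lambda_{1}=c+\frac{\partial\chi}{\partial y}g$. The summand $c$ has the smoothness and polynomial growth demanded by Theorem \ref{T:RegularityPoisson} directly from Condition \ref{A:Assumption1}. For the summand $\frac{\partial\chi}{\partial y}g$ the point is that $\chi$ is itself the solution of the cell problem $\mathcal{L}_{x}^{1}\chi_{\ell}=-b_{\ell}$, so its $x$- and mixed $xy$-derivatives, together with their polynomial-growth bounds, are controlled by Theorem \ref{T:RegularityPoisson} (and the finer results of \cite{PardouxVeretennikov2} that it summarizes); combined with the regularity and growth of $g$ (which in Regime 1 inherits the conditions of $b,c$ per Condition \ref{A:Assumption1}) this gives $\lambda_{1}$ the required regularity. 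For $\bar{\lambda}_{1}(x)=\int_{\mathcal{Y}}\lambda_{1}(x,y)\mu_{1}(dy|x)$ I would invoke the $x$-differentiability of the invariant measure $\mu_{1}(dy|x)$ from Theorem 1 of \cite{PardouxVeretennikov2}, exactly as in Lemma \ref{L:Representationfor_I}.

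For Part (ii), I would apply the It\^{o}--Krylov formula to $\Phi$ evaluated at $(X^{\epsilon}_{t},Y^{\epsilon}_{t})$. Grouping the $y$-derivative terms according to the fast drift $\frac{\epsilon}{\delta^{2}}f+\frac{1}{\delta}g$ and the fast diffusion $\frac{\epsilon}{\delta^{2}}(\tau_{1}\tau_{1}^{T}+\tau_{2}\tau_{2}^{T})$ produces exactly $\frac{\epsilon}{\delta^{2}}\mathcal{L}_{x}^{1}\Phi+\frac{1}{\delta}\frac{\partial\Phi}{\partial y}g$; using the Poisson equation $\mathcal{L}_{x}^{1}\Phi=-(\lambda-\bar{\lambda})$ and the definition $J=\frac{\partial\Phi}{\partial y}g$ this equals $-\frac{\epsilon}{\delta^{2}}(\lambda-\bar{\lambda})+\frac{1}{\delta}J$. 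The remaining slow drift $\frac{\partial\Phi}{\partial x}(\frac{\epsilon}{\delta}b+c)$, the second-order $x$-term $\frac{\epsilon}{2}\textrm{tr}[\frac{\partial^{2}\Phi}{\partial x^{2}}\sigma\sigma^{T}]$ and the mixed term $\frac{\epsilon}{\delta}\textrm{tr}[\frac{\partial^{2}\Phi}{\partial x\partial y}\sigma\tau_{1}^{T}]$, after multiplication by $\delta^{2}/\epsilon$, assemble into $\mathcal{R}^{\epsilon}_{2}$, while the stochastic integrals scale by the same factor into the claimed $\frac{\delta}{\sqrt{\epsilon}}$-weighted martingales. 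Solving the resulting identity for $\int_{0}^{t}(\lambda-\bar{\lambda})\,ds=\Gamma^{\epsilon,\delta}_{t}$ yields the displayed representation, with the leading term $\frac{\delta}{\epsilon}\int_{0}^{t}J\,ds$ and the boundary term $-\frac{\delta^{2}}{\epsilon}(\Phi_{t}-\Phi_{0})$ coming out with the correct prefactors.

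The main obstacle I anticipate is the regularity verification in Part (i): because $\lambda_{1}$ carries the factor $\frac{\partial\chi}{\partial y}$, one must propagate the regularity of the auxiliary corrector $\chi$ (itself only the solution of a Poisson equation) through the product with $g$ in order to meet the $C^{2,\alpha}$ and polynomial-growth demands of Theorem \ref{T:RegularityPoisson}. This step has no analogue in Regime 2, where $\lambda_{2}=\gamma b+c$ was a smooth combination of the structural coefficients; here it is precisely the place where the cross-derivative bound $|\partial_{x}\partial_{y}\chi|\leq K'(1+|y|^{q'})$ furnished by Theorem \ref{T:RegularityPoisson} is essential. Once this regularity is secured, the It\^{o}--Krylov expansion and the rearrangement are routine.
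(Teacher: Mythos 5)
Your Part (ii) is correct and follows exactly the paper's route: the paper likewise applies the It\^{o}--Krylov formula to $\Phi$ along $\left(X^{\epsilon}_{t},Y^{\epsilon}_{t}\right)$ and observes that the bookkeeping is identical to Lemma \ref{L:Representationfor_I}; your grouping of the fast terms into $\frac{\epsilon}{\delta^{2}}\mathcal{L}^{1}_{x}\Phi+\frac{1}{\delta}\frac{\partial \Phi}{\partial y}g$, followed by multiplication by $\delta^{2}/\epsilon$, reproduces all the stated prefactors, including $\frac{\delta}{\epsilon}$ on the $J$-term and $\frac{\delta}{\sqrt{\epsilon}}$ on the martingales.

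The gap is in Part (i), at precisely the step you single out as the main obstacle. To place $G=\lambda_{1}-\bar{\lambda}_{1}$ under Theorem \ref{T:RegularityPoisson} you must bound $\sum_{i=0}^{2}\left|\frac{\partial^{i} G}{\partial x^{i}}(x,y)\right|$ by $K\left(1+|y|^{q}\right)$, and since $\lambda_{1}=c+\frac{\partial\chi}{\partial y}g$, the second $x$-derivative of $G$ involves the \emph{third-order} mixed derivative $\frac{\partial^{3}\chi}{\partial y\partial x^{2}}$. The estimate you invoke, $\left|\frac{\partial^{2}\chi}{\partial x\partial y}\right|\leq K'\left(1+|y|^{q'}\right)$ from Theorem \ref{T:RegularityPoisson}, controls only one $x$-derivative of $\frac{\partial\chi}{\partial y}$ and is therefore insufficient: Theorem \ref{T:RegularityPoisson} furnishes no bound at all on $\frac{\partial^{3}\chi}{\partial y\partial x^{2}}$. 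The paper is explicit about this point --- it states that the required estimate $\left|\frac{\partial^{3}\chi}{\partial y\partial x^{2}}(x,y)\right|\leq K\left(1+|y|^{q}\right)$ is \emph{not} immediately implied by Theorem \ref{T:RegularityPoisson}, and obtains it instead from Theorem 1 of \cite{PardouxVeretennikov1}. Your hedge toward ``finer results of \cite{PardouxVeretennikov2}'' points at the wrong source: \cite{PardouxVeretennikov2} is what the paper (and you, correctly) use for the $x$-smoothness of the invariant measure $\mu_{1}(dy|x)$ entering $\bar{\lambda}_{1}$, not for the third derivative of the corrector. As written, your hypothesis check for the term $\frac{\partial\chi}{\partial y}g$ rests on a bound the cited theorem does not provide; the fix is a one-line appeal to the higher-order estimates of \cite{PardouxVeretennikov1}, but without it the verification fails.
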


\begin{proof}
Part (i). We need to verify that the right hand side of (\ref{Eq:CellProblemCLT}), i.e., $G(x,y)=\lambda(x,y)-\bar{\lambda}(x)$ satisfies the assumptions of Theorem \ref{T:RegularityPoisson}. Keeping in mind that in Regime 2, we have $\lambda(x,y)=c(x,y)+\frac{\partial \chi}{\partial y}g(x,y)$, the smoothness and growth conditions are satisfied for $c(x,y)$ and $g(x,y)$ due to Condition \ref{A:Assumption1}. For the corrector term $\frac{\partial \chi}{\partial y}$ we need the estimate
\[
\left|\frac{\partial^{3} \chi}{\partial y\partial x^{2}}(x,y)\right|\leq K\left(1+|y|^{q}\right)
\]
This is not immediately implied by Theorem \ref{T:RegularityPoisson}, but due to Condition \ref{A:Assumption1} is true via Theorem 1 in \cite{PardouxVeretennikov1}.

For $\bar{\lambda}(x)=\int_{\mathcal{Y}}\lambda(x,y)\mu(dy|x)$ the same is true if the invariant measure $\mu(dy|x)$ is appropriately smooth. This follows from the estimates in Theorem 1 of \cite{PardouxVeretennikov2}.

Part (ii). By part (i) we can apply the  It\^{o}-Krylov formula for functions with Sobolev derivatives to $\Phi(x,y)=\left(\Phi_{1}(x,y),\cdots,\Phi_{m}(x,y)\right)$ with $(x,y)=\left(X^{\epsilon}_{t},Y^{\epsilon}_{t}\right)$. The rest follow as in Lemma \ref{L:Representationfor_I} and thus the details are omitted.
\end{proof}

Hence, by Lemma \ref{L:Representationfor_II} we get that $\eta^{\epsilon}_{t}=\Delta^\epsilon_{t}/\beta^{\epsilon}$ satisfies

\begin{align}
\eta^\epsilon_{t} &=  \int_{0}^{t} D_x \bar{\lambda} ( \bar X_s) \eta^\eps_{s} ds+
\frac{\delta/\epsilon}{\beta^{\epsilon}}\int_{0}^{t}J  \left( X_{s}^{\epsilon},Y_{s}^{\epsilon}\right)  ds\label{Eq:DeltaExpressionReg2}\\
&\hspace{0.3cm}+\int_{0}^{t}\frac{1}{\beta^{\epsilon}}\left[\mathcal{R}^{\epsilon}_{1}+\mathcal{R}^{\epsilon}_{2}\right]\parbar{ X^\eps _s, Y^\eps _s }ds+ \int_0^t \frac{1}{\beta^{\epsilon}}\Lambda[\bar{\lambda}]\parbar{ \bar X_s,X^\eps_s}ds\nonumber\\
&\hspace{0.3cm}-\frac{\left(\delta^{2}/\epsilon\right)}{\beta^{\epsilon}}\left(\Phi \parbar{ X^\eps _t, Y^\eps _t   }-\Phi \parbar{ X^\eps _0, Y^\eps _0 }\right)- \left(\delta/\beta^{\epsilon}\right) \left( \chi \parbar{ X^\eps _t, Y^{\epsilon}_{t}  }- \chi \parbar{ x_{0}, y_{0} }\right)\nonumber\\
&\hspace{0.3cm}+\frac{\delta/\sqrt{\epsilon}}{\beta^{\epsilon}}\int_{0}^{t} \left(\delta\frac{\partial \Phi}{\partial x}\sigma+\sigma+\frac{\partial \Phi}{\partial y}\tau_{1}\right)\parbar{ X^\eps _s, Y^\eps _s }dW_s+\frac{\delta/\sqrt{\epsilon}}{\beta^{\epsilon}}\int_{0}^{t} \frac{\partial \Phi}{\partial y}\tau_{2}\parbar{ X^\eps _s, Y^\eps _s  }dB_s\nonumber\\
& \quad +\frac{\sqrt{\epsilon}}{\beta^{\epsilon}} \int_{0}^{t}\left(\sigma+\frac{\partial \chi}{\partial y} \tau_{1}+\delta \frac{\partial \chi}{\partial x} \sigma \right)\parbar{ X^\eps _s, Y^{\epsilon}_{s} } dW_{s}
+\frac{\sqrt{\epsilon}}{\beta^{\epsilon}}\int_{0}^{t}\frac{\partial \chi}{\partial y}\tau_{2}\left(  X^{\epsilon}_{s},Y^{\epsilon}_{s}\right)dB_{s}\nonumber
  \end{align}

Then, the proof follows the same steps as in the case of Regime $2$, except for two minor modifications, which we now explain, even though we will not repeat the proof. The first modification is that in the tightness proof, the corresponding process $\hat{Y}$ is defined to be $\hat{Y}^{\epsilon}_{t}=Y^{\epsilon}_{\frac{\delta^{2}}{\epsilon}t}$. The second modification is that for the identification of the limit, we apply It\^{o} formula to a smooth function with the process

\begin{equation*}
\psi^{\epsilon}_{t}=\eta^{\epsilon}_{t}+\left(\left(\delta^{2}/\epsilon\right)/\beta^{\epsilon}\right)\left(\Phi \parbar{ X^\eps _t,  Y^\eps _t }-\Phi \parbar{ x_0, y_0 }  \right)+\left(\delta/\beta^{\epsilon}\right) \left(\chi \parbar{ X^\eps _t, Y^\eps _t }-\chi \parbar{ x_0, y_0  }\right).
\end{equation*}

\section{An Example and Connections to Large Deviations}\label{S:Examples}
In this section we present an example to illustrate our results. We consider a stochastic model with two scales, one faster than the other one:

\begin{eqnarray}
dX_{s}&=&c^{\epsilon}\left(X_{s},Y_{s}\right)ds+\sigma\left(Y_{s}\right)
dW_{s}, \label{Eq:SVModela}\\
dY_{s}&=& \frac{1}{\delta^{2}}\left(m-Y_{s}\right)   ds+\frac{1}{\delta}\left[
\rho dW_{s}+\sqrt{1-\rho^{2}}dB_{s}\right]\nonumber
\end{eqnarray}
where $0<\epsilon,\delta\ll 1$, $m\in\mathbb{R}$ and $\rho\in[-1,1]$ is the correlation between the noise of the $X$ and $Y$ process. Assume that $c^{\epsilon}(x,y)$ is such that
\[
\epsilon c^{\epsilon}(x,y)\rightarrow c(x,y)\quad \textrm{ uniformly in }(x,y) \textrm{ for some }c(x,y)\textrm{ as }\epsilon\downarrow 0.
\]

Assume that $c(x,y)$ and $\sigma(y)$ satisfy Condition \ref{A:Assumption1}. If, we are interested in short time asymptoptics, then it is convenient to change time $s\mapsto \epsilon s$ with $0<\epsilon\ll 1$.  Writing the system under the new timescale, we obtain $\left\{\left(X^{\epsilon}_{s}, Y^{\epsilon}_{s}\right), s\in[0,T]\right\}$ as the unique strong solution to:
\begin{eqnarray}
dX^{\epsilon}_{s}&=&\epsilon c^{\epsilon}\left(X^{\epsilon}_{s}, Y^{\epsilon}_{s}\right)ds+\sqrt{\epsilon}%
\sigma\left(Y^{\epsilon}_{s}\right)
dW_{s}, \label{Eq:SVModel}\\
dY^{\epsilon}_{s}&=& \frac{\epsilon}{\delta^{2}}\left(m-Y^{\epsilon}_{s}\right)   ds+\frac{\sqrt{\epsilon}}{\delta}\left[
\rho dW_{s}+\sqrt{1-\rho^{2}}dB_{s}\right]\nonumber
\end{eqnarray}

Both components $(X,Y)$ take values in $\mathbb{R}$. So, we have $\mathcal{Y}=\mathbb{R}$. We supplement the system with initial condition $(X^{\epsilon}(0),Y^{\epsilon}(0))=(x_{0},y_{0})$. To connect to the notation of the general model (\ref{Eq:Main}), this corresponds to
\[
b(x,y)=0,~~~\sigma(x,y)=\sigma(y),
 \]
 \[f(x,y)=m-y,~~~ g(x,y)=0, ~~~\tau_{1}(x,y)=\rho, ~~~\tau_{2}(x,y)=\sqrt{1-\rho^{2}}.
\]

\subsection{Law of large numbers and central limit theorem}

Let us see how our theorem applies for this model. For both regimes, the invariant measure corresponding to the fast motion is actually the same, it is Gaussian, does not depend on $x$ and it is equal to
\begin{equation*}
\mu(dy)=\frac{1}{\sqrt{\pi}} e^{-(y-m)^{2}}dy
\end{equation*}

The law of large numbers is given in the following corollary.
\begin{corollary}
Consider Regime $i=1,2$. Under the assumptions of Theorem \ref{T:LLN}, we have that $X^{\epsilon}_{t}$ converges in probability, uniformly in $t\in[0,T]$ as $\epsilon\downarrow 0$ to the deterministic function $\bar{X}^{i}_{t}(x_{0})$. For both $i=1$ and $i=2$ we have that $\bar{X}_{t}(x_{0})=\bar{X}^{1}_{t}(x_{0})=\bar{X}^{2}_{t}(x_{0})$ satisfies the ODE
\[
\bar{X}_{t}=x_{0}+\int_{0}^{t} \left[\int_{\mathcal{Y}}c(\bar{X}_{s},y)\frac{1}{\sqrt{\pi}} e^{-(y-m)^{2}}dy \right]ds
\]
\end{corollary}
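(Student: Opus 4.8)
The plan is to verify that this specific model satisfies all the hypotheses of Theorem \ref{T:LLN} and then to compute the explicit form of the limiting drift $\bar{\lambda}_{i}$ in each regime. First I would identify the coefficients with those of the general system (\ref{Eq:Main}): here $b\equiv 0$, $g\equiv 0$, $c(x,y)$ is the given drift, $\sigma(x,y)=\sigma(y)$, $f(x,y)=m-y$, $\tau_{1}=\rho$, $\tau_{2}=\sqrt{1-\rho^{2}}$. Since $b\equiv 0$, the centering Condition \ref{A:Assumption2} for Regime 2 holds trivially, and the Lyapunov Condition \ref{A:LyapunovCondition} is immediate because $f(x,y)\cdot y=(m-y)y\to-\infty$ (and likewise $\gamma f+g=\gamma(m-y)$ gives the same sign of decay) as $|y|\to\infty$. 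Thus Theorem \ref{T:LLN} applies in both regimes, yielding convergence in probability of $X^{\epsilon}_{t}$ to the deterministic solution $\bar{X}^{i}_{t}(x_{0})$ of (\ref{Eq:LimitingODE}).

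Next I would compute the invariant measure. The operator $\mathcal{L}^{1}_{x}=(m-y)\partial_{y}+\tfrac{1}{2}(\rho^{2}+(1-\rho^{2}))\partial_{yy}=(m-y)\partial_{y}+\tfrac{1}{2}\partial_{yy}$ is the generator of an Ornstein-Uhlenbeck process, whose stationary density solves the stationary Fokker-Planck equation $\tfrac{1}{2}\mu''+\partial_{y}((y-m)\mu)=0$; one checks directly that $\mu(dy)=\pi^{-1/2}e^{-(y-m)^{2}}dy$ is the normalized solution (a Gaussian with mean $m$ and variance $1/2$). For Regime 2 the operator $\mathcal{L}^{2}_{x}=\gamma(m-y)\partial_{y}+\tfrac{\gamma}{2}\partial_{yy}$ is just $\gamma\mathcal{L}^{1}_{x}$, so it has the \emph{same} invariant measure $\mu$; this is the source of the statement $\mu_{1}=\mu_{2}=\mu$ and explains why the two regimes share a limit. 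Crucially, since the coefficients $f,\tau_{1},\tau_{2}$ do not depend on $x$, neither does $\mu$, so $\mu(dy|x)=\mu(dy)$.

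The final step is to identify $\bar{\lambda}_{i}$. In Regime 1, $\lambda_{1}(x,y)=c(x,y)+\partial_{y}\chi\,g(x,y)=c(x,y)$ because $g\equiv 0$. In Regime 2, $\lambda_{2}(x,y)=\gamma b(x,y)+c(x,y)=c(x,y)$ because $b\equiv 0$. Hence in both regimes $\lambda_{i}(x,y)=c(x,y)$, and averaging gives the common drift
\[
\bar{\lambda}_{i}(x)=\int_{\mathcal{Y}}c(x,y)\,\mu(dy)=\int_{\mathbb{R}}c(x,y)\,\frac{1}{\sqrt{\pi}}e^{-(y-m)^{2}}\,dy,
\]
which is independent of $i$. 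Substituting into (\ref{Eq:LimitingODE}) yields the stated ODE and shows $\bar{X}^{1}=\bar{X}^{2}=\bar{X}$.

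I do not expect any genuine obstacle here: the corollary is essentially a worked specialization of the general theorem, and the only computations of substance are the verification of the Lyapunov and centering conditions (both immediate from $b=g=0$ and the linear drift $f=m-y$) and the elementary Gaussian integral identifying the OU invariant measure. The one point worth stating carefully is \emph{why} the two regimes coincide, namely that the absence of the $b$ and $g$ terms collapses both $\lambda_{1}$ and $\lambda_{2}$ to $c$ while the proportionality $\mathcal{L}^{2}_{x}=\gamma\mathcal{L}^{1}_{x}$ forces a common invariant measure; this is the only place where the special structure of the model is really used.
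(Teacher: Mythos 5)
Your proposal is correct and follows essentially the same route as the paper, whose proof is the one-line observation that the corollary ``follows trivially by Definition \ref{Def:ThreePossibleFunctions} and Theorem \ref{T:LLN}''; you simply make explicit the verifications the paper leaves implicit (the Lyapunov and centering conditions via $b=g=0$, the Gaussian invariant measure of the OU generator, and the collapse $\lambda_{1}=\lambda_{2}=c$ together with $\mathcal{L}^{2}_{x}=\gamma\mathcal{L}^{1}_{x}$ forcing $\mu_{1}=\mu_{2}$). All of these computations check out, so there is nothing to correct.
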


\begin{proof}
It follows trivially by the Definition \ref{Def:ThreePossibleFunctions} and Theorem \ref{T:LLN}.
\end{proof}

Thus, the law of large numbers is the same independently of the order that $\epsilon$ and $\delta$ go to zero. But this is not the same for the second order correction. In particular, let us  define for $F\in C^{2}(\mathcal{Y})$ the operator
\[
\mathcal{L}F(y)=(m-y)F'(y)+\frac{1}{2}F''(y)
\]

Then, it is easy to see that $\Phi_{2}$ is given by

\begin{eqnarray}
& & \mathcal{L}\Phi_{2}(x,y)=-\frac{1}{\gamma}\left(c\left( x,y\right) - \bar{c}(x)\right),\quad\int_{\mathcal{Y}}%
\Phi_{2}(x,y)\mu(dy)=0,\hspace{0.1cm} \label{Eq:PoissonEquationExample}\\
& & \Phi_{2} \textrm{ grows at most polynomially in }y\textrm{ as } |y|\rightarrow\infty\nonumber
\end{eqnarray}
where $\bar{c}(x)=\frac{1}{\sqrt{\pi}}\int_{\mathcal{Y}}c(x,y) e^{-(y-m)^{2}}dy$. Due to our assumptions, Theorem \ref{T:RegularityPoisson} applies to $\Phi_{2}(x,y)$. Moreover, we have
\begin{eqnarray}
J_{1}(x,y)&=&0\nonumber\\
q_{1}(x,y)&=&\sigma^{2}(y)\nonumber
\end{eqnarray}
and
\begin{eqnarray}
J_{2}(x,y)&=&\mathcal{L}\Phi_{2}(x,y)=-\frac{1}{\gamma}\left(c(x,y)-\bar{c}(x)\right).\nonumber\\
q_{2}(x,y)&=&\sigma^{2}(y)+\left(\partial_{y}\Phi_{2}(x,y)\right)^{2}+2\rho\sigma(y)\partial_{y}\Phi_{2}(x,y)\nonumber
\end{eqnarray}

Recall that for any function $f(x,y)$ we denote by $\bar{f}(x)=\int_{\mathcal{Y}}f(x,y)\mu(dy)$.  So, by construction we get that $\bar{J}_{2}(x)=0$. Moreover, if we denote by
\[
q=\frac{1}{\sqrt{\pi}}\int_{\mathcal{Y}}\sigma^{2}(y) e^{-(y-m)^{2}}dy
\]
then,
\[
\bar{q}_{1}(x)=q,\quad \textrm{and } \quad  \bar{q}_{2}(x)=q+\frac{1}{\sqrt{\pi}}\int_{\mathcal{Y}}\left[\left(\partial_{y}\Phi_{2}(x,y)\right)^{2}+2\rho\sigma(y)\partial_{y}\Phi_{2}(x,y)\right] e^{-(y-m)^{2}}dy
\]
Theorem \ref{T:CLT2} translates to the following corollary.

\begin{corollary}\label{C:CLT2}
Under the notation and assumptions of Theorem \ref{T:CLT2} we have that the process
$$\eta^{\epsilon}_{t}=\frac{X^{\epsilon}_{t}-\bar{X}_{t}}{\beta^{\epsilon}_{i}}$$ converges weakly in the space of continuous functions in $\mathcal{C}\left([0,T];\mathbb{R}\right)$
to the solution of the Ornstein-Uhlenbeck type of process

\begin{enumerate}
\item{Regime 1:
\begin{eqnarray}
 d\eta_{t}&=&D\bar{c}(\bar{X}_{t}(x_{0}))\eta_{t}dt+\one \left(\ell_{1}\neq 0 \right)\bar{q}^{1/2}_{1}(\bar{X}_{t}(x_{0}))d\tilde{W}_{t}\nonumber\\
\eta_{0}&=&0.\nonumber
\end{eqnarray}
\item{Regime 2:
\begin{eqnarray}
 d\eta_{t}&=&D\bar{c}(\bar{X}_{t}(x_{0}))\eta_{t}dt +\one \left(\ell_{2}\neq 0 \right)\bar{q}^{1/2}_{2}(\bar{X}_{t}(x_{0}))d\tilde{W}_{t}\nonumber\\
\eta_{0}&=&0,\nonumber
\end{eqnarray}}
}
\end{enumerate}
where $\tilde{W}$ is a standard Wiener process.
\end{corollary}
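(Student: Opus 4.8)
The plan is to derive the corollary as a direct specialization of Theorem \ref{T:CLT2} to the coefficients $b=0$, $\sigma(x,y)=\sigma(y)$, $f(x,y)=m-y$, $g=0$, $\tau_{1}=\rho$, $\tau_{2}=\sqrt{1-\rho^{2}}$, so that essentially all of the work is verifying the hypotheses and identifying the averaged functions $\bar{\lambda}_{i}$, $\bar{J}_{i}$ and $\bar{q}_{i}$ that feed into the limiting SDE (\ref{Eq:LimitingProcess}). First I would check Condition \ref{A:Assumption1}: the nondegeneracy $\tau_{1}\tau_{1}^{T}+\tau_{2}\tau_{2}^{T}=\rho^{2}+(1-\rho^{2})=1$ is immediate, the smoothness and growth bounds on $c$ and $\sigma$ are assumed in the statement, and $f,\tau_{1},\tau_{2}$ are affine respectively constant, hence trivially in $C^{2,2+\alpha}_{b}$. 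Next I would verify Condition \ref{A:LyapunovCondition}: since $f(x,y)\cdot y=(m-y)y=my-y^{2}\to-\infty$ as $|y|\to\infty$ uniformly in $x$, part (i) holds, and because $g=0$ the same computation gives part (ii). As $b=0$, Condition \ref{A:Assumption2} is vacuous, and the invariant measure of the Ornstein--Uhlenbeck operator $\mathcal{L}$ is the stated Gaussian $\mu(dy)$, which is independent of $x$ and common to both regimes.

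The second step is to read off the averaged coefficients, most of which are already assembled in the paragraphs preceding the corollary. In Regime 1 the vanishing of $b$ forces the cell-problem solution $\chi$ of (\ref{Eq:CellProblem}) with $G=b_{\ell}$ to be identically zero, so $\lambda_{1}=c$, $J_{1}=\partial_{y}\Phi_{1}\,g=0$ and $q_{1}=\sigma^{2}(y)$, whence $\bar{\lambda}_{1}=\bar{c}$, $\bar{J}_{1}=0$ and $\bar{q}_{1}=q$. In Regime 2, $\lambda_{2}=\gamma b+c=c$ and the operator $\mathcal{L}_{x}^{2}$ collapses to $\gamma\mathcal{L}$, so the Poisson equation (\ref{Eq:CellProblemCLT}) reduces to (\ref{Eq:PoissonEquationExample}), and Theorem \ref{T:RegularityPoisson} supplies the required polynomially growing $\Phi_{2}$. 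From (\ref{Eq:Jfunction}) one obtains $J_{2}=-\frac{1}{\gamma}(c-\bar{c})=\mathcal{L}\Phi_{2}$, and the centering built into (\ref{Eq:PoissonEquationExample}) yields $\bar{J}_{2}=0$. Substituting $\tau_{1}=\rho$, $\tau_{2}=\sqrt{1-\rho^{2}}$ into the scalar definition of $q_{2}$ and expanding collapses the two noise contributions into $q_{2}=\sigma^{2}+2\rho\sigma\,\partial_{y}\Phi_{2}+(\partial_{y}\Phi_{2})^{2}$, which gives the stated $\bar{q}_{2}$.

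Finally I would substitute these into the limiting equation (\ref{Eq:LimitingProcess}). The crucial observation is that $\bar{J}_{i}=0$ in both regimes, so the entire additional-drift bracket $[\ell_{i}^{-1}\one(\ell_{i}\in(0,\infty])+\one(\ell_{i}=0)]\bar{J}_{i}(\bar{X}^{i}_{t})$ disappears irrespective of the value of $\ell_{i}$, leaving only the linear drift $D\bar{c}(\bar{X}_{t})\eta_{t}\,dt$ (using $\bar{\lambda}_{1}=\bar{\lambda}_{2}=\bar{c}$, hence $D\bar{\lambda}_{i}=D\bar{c}$) together with the noise term $\one(\ell_{i}\neq0)\bar{q}_{i}^{1/2}(\bar{X}_{t})\,d\tilde{W}_{t}$; this is precisely the displayed system in each regime. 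There is no genuine analytic obstacle, since the statement is a clean application of the already-proved Theorem \ref{T:CLT2}; the only point demanding care is the bookkeeping in the scalar diffusion coefficient, namely checking that the correlation-weighted combination of the driving noises $\tau_{1}=\rho$ and $\tau_{2}=\sqrt{1-\rho^{2}}$ reproduces the single expression $\sigma^{2}+2\rho\sigma\,\partial_{y}\Phi_{2}+(\partial_{y}\Phi_{2})^{2}$ rather than splitting into two uncorrelated contributions.
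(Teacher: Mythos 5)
Your proposal is correct and follows essentially the same route as the paper: the paper also obtains the corollary by specializing Theorem \ref{T:CLT2}, computing the Gaussian invariant measure, noting $\chi=0$ and $J_{1}=0$, $q_{1}=\sigma^{2}$ in Regime 1, reducing the Poisson equation to (\ref{Eq:PoissonEquationExample}) so that $J_{2}=\mathcal{L}\Phi_{2}=-\frac{1}{\gamma}(c-\bar{c})$ with $\bar{J}_{2}=0$ by the centering condition, and expanding $q_{2}=\sigma^{2}+2\rho\sigma\,\partial_{y}\Phi_{2}+(\partial_{y}\Phi_{2})^{2}$. Your explicit verification of Conditions \ref{A:Assumption1}--\ref{A:Assumption2} and of the vanishing of the extra drift bracket for every value of $\ell_{i}$ is exactly the bookkeeping the paper leaves implicit.
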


Therefore, even though the law of large numbers limit happens to be the same independently of the order that $\epsilon$ and $\delta$ go to zero, the situation changes when one considers the second order correction given by the fluctuation analysis. In particular, the two limiting processes arising from the fluctuations have different diffusion coefficients. For example in the case $\rho=0$ and if $c(x,y)$ is not identically zero or only a function of $x$, then it is easy to see that $\bar{q}_{2}(x)>\bar{q}_{1}(x)$.

As a specific example, let us consider the simple case $c(x,y)=y^{2}$. Then, it can be easily verified that $\bar{c}=\frac{1}{2}+m^{2}$. Moreover, by direct substitution we can check that
\[
\Phi_{2}(y)=\frac{\frac{1}{2}y^{2}+my-\frac{1}{4}-\frac{3}{2}m^{2}}{\gamma}
\]

satisfies the Poisson equation (\ref{Eq:PoissonEquationExample}). Thus a straightforward computation shows that
\[
\bar{q}_{2}=q+\frac{4m^{2}+\frac{1}{2}}{\gamma^{2}}+\frac{2\rho}{\gamma\sqrt{\pi}}\int_{\mathbb{R}}\sigma(y)(y+m)e^{-(y-m)^{2}}dy
\]
Notice that if $\bar{q}_{2}$ is viewed as function of $\gamma$, then $\bar{q}_{2}(\gamma)\rightarrow \bar{q}_{1}$ as $\gamma\rightarrow\infty$ and that in the case $\rho=0$
\[
\bar{q}_{2}=q+\frac{4m^{2}+\frac{1}{2}}{\gamma^{2}}>q=\bar{q}_{1}.
\]

\subsection{Connections to Large Deviations}\label{S:LDP}
In \cite{Spiliopoulos2012}, sample path large deviations principle for the family $\left\{X^{\epsilon}_{\cdot},\epsilon\in(0,1)\right\}$ is established in the case where the coefficients of the system (\ref{Eq:Main}) are periodic in the fast motion $y$. In general, it is known that central limit theorems are related to the second derivative of the related rate functions. A standard example is the case of Cramer's theorem, see \cite{Hollander2000}. Let us outline this connection in our setup in the simple case of the example (\ref{Eq:SVModela}) assuming that
$\lim_{\epsilon\downarrow 0}\epsilon c^{\epsilon}(x,y)=0$ uniformly in $x$ and $y$ in the case of Regime 1. The discussion that follows is heuristic, but illustrative of the connection between central limit theorems and large deviations for multiple scale diffusion processes.

If one is interested in short time asymptotics, then we can either consider the random variables $X_{t}$ satisfying (\ref{Eq:SVModela}) as $t\downarrow 0$, or equivalently $X^{\epsilon}_{1}$ satisfying (\ref{Eq:SVModel}) as $\epsilon\downarrow 0$ for $t=1$.  By  Theorem 3.4 in \cite{Spiliopoulos2012}, we have that the action functional for the random variables $X_{t}$ should satisfy a large deviations principle as $t\downarrow 0$ with rate function given by
\begin{eqnarray}
S(x_{1})&=&\frac{1}{2}\inf_{\phi\in \mathcal{AC}([0,1];\mathbb{R}), \phi_{0}=x_{0},\phi_{1}=x_{1}}\int_{0}^{1}\frac{|\dot{\phi}_{s}|^{2}}{q}ds\nonumber\\
&=&\frac{1}{2}\inf_{\phi\in \mathcal{AC}([0,1];\mathbb{R}), \phi_{0}=x_{0}/\sqrt{q},\phi_{1}=x_{1}/\sqrt{q}}\int_{0}^{1}|\dot{\phi}_{s}|^{2}ds,
\end{eqnarray}
where $\mathcal{AC}([0,1];\mathbb{R})$ is the space of absolutely continuous functions in $[0,1]$ and we recall that $q=\frac{1}{\sqrt{\pi}}\int_{\mathcal{Y}}\sigma^{2}(y) e^{-(y-m)^{2}}dy$.

A simple Lagrange multiplier argument shows that the variational problem in the display above can be solved explicitly, yielding
\[
S(x_{1})=\frac{(x_{1}-x_{0})^{2}}{2q}
\]
Therefore, for $x_{1}=x_{0}+\eta$ we get the logarithmic asymptotics
\[
\mathbb{P}_{x_{0},y_{0}}\left\{X_{t}\geq x_{0}+\eta\right\}\approx e^{-\frac{S(x_{0}+\eta)}{t}},\textrm{ as }t\downarrow 0.
\]
Consider now $\nu>0$ and formally set $\eta=\nu \sqrt{t}$. Then, we have
\[
S(x_{0}+\nu \sqrt{t})=\frac{\nu^{2}}{2q} t
\]
and notice that $S''(x_{0})=1/q$. This implies
\[
\mathbb{P}_{x_{0},y_{0}}\left\{\frac{X_{t}-x_{0}}{\sqrt{t}}\geq \nu\right\}\approx e^{-\frac{1}{2}S''(x_{0})\nu^{2}}=e^{-\frac{\nu^{2}}{2q}},\textrm{ as }t\downarrow 0.
\]
This is exactly the Gaussian limit law established in Corollary \ref{C:CLT2} with $\bar{c}=0$.

\section{Acknowledgements}
The author was partially supported, during revisions of this article, by the National Science Foundation
(DMS 1312124).
%%%%%%%%%%%%%%%%%%%%%%%%%%%%%%%%%%%%%%%%%%%%%%%%%%%%%%%%%%%%


\begin{thebibliography}{0}
%%%%%%%%%%%%%%%%%%%%%%%%%%%%%%%%%%%%%%%%%%%%%%%%%%%%%%%%%%%%%%%%%





\bibitem {BLP}A. Bensoussan, J.L. Lions, G. Papanicolaou, \textit{Asymptotic
Analysis for Periodic Structures}, Vol 5, Studies in Mathematics and its
Applications, North-Holland Publishing Co., Amsterdam, 1978.


\bibitem{BaierFreidlin}
D. Baier and M. I. Freidlin, Theorems on large deviations and stability for random
perturbations, Dokl. Akad. Nauk SSSR Vol. 235 (1977), pp. 253-256.
= Soviet Math. Dokl. Vol. 18 (1977), pp. 905-909.

\bibitem{Billingsley1968}
P. Billingsley, \textit{Convergence of Probability Measures}, 1968, Wiley, New York.


\bibitem {DupuisSpiliopoulos}P. Dupuis and K. Spiliopoulos, Large deviations
for multiscale problems via weak convergence methods, \emph{Stochastic
Processes and their Applications}, Vol. 122, (2012), pp. 1947-1987.



\bibitem{Freidlin1978}
M.I. Freidlin, The Averaging Principle and Theorems on Large Deviations, \emph{Russian Mathematical Surveys} Vol. 33, No. 5, (1978), pp. 117-176.

\bibitem {FS}M.I. Freidlin, R. Sowers, A comparison of homogenization and large
deviations, with applications to wavefront propagation , \textit{Stochastic
Process and Their Applications}, Vol. 82, Issue 1, (1999), pp. 23--52.


\bibitem{FWBook}
 M.I. Freidlin, A.D. Wentzell, Random Perturbations of Dynamical Systems, 2nd Edition, 1998, Springer.

\bibitem{Guillin}
A. Guillin, Averaging principle of SDE with small diffusion: moderate deviations, {\em Annals of Probability}, Vol. 31, No. 1, (2003), pp. 413�443.

\bibitem{Hasminskii}
R.Z. Khasminskii, {\em Stochastic Stability of Differential Equations}, 2nd Ed., Springer, 2011.

\bibitem{Hollander2000}
F. Den Hollander, {\em Large deviations}, American Mathematical Society, Providence, RI, 2000.

\bibitem{KlebanerLipster}
Klebaner, F. C. and Liptser, R. Moderate Deviations for Randomly Perturbed Dynamical Systems, Stochastic Processes and their Applications, Vol. 80, (1999), pp. 157-176.

\bibitem{Liptser}
Liptser, R. Sh. and Shiryayev, A. N., Theory of martingales, Mathematics and its Applications (Soviet Series), 49, 1989.

\bibitem{LiptserPaper}
Liptser, Robert and Stoyanov, Jordan, \emph{Stochastic version of the averaging principle for diffusion type processes}, Stochastics Stochastics Rep., Stochastics and Stochastics Reports, Vol. 32, No. 3-4, (1990), pp. 145--163.


\bibitem{PardouxVeretennikov1}E. Pardoux, A.Yu. Veretennikov, On Poisson
equation and diffusion approximation I, \textit{Annals of Probability}, Vol.
29, No. 3, (2001), pp. 1061-1085.


\bibitem{PardouxVeretennikov2}E. Pardoux, A.Yu. Veretennikov, On Poisson
equation and diffusion approximation 2, \textit{Annals of Probability}, Vol.
31, No. 3, (2003), pp. 1166-1192.

%\bibitem{PS} G.A. Pavliotis, A.M. Stuart,\emph{ Multiscale methods: Averaging and
%Homogenization}, Springer, 2007.


\bibitem{Spiliopoulos2012}
K. Spiliopoulos,
\newblock Large deviations and importance sampling for systems of slow-fast motion,
  \newblock \emph{Applied Mathematics and Optimization,}, Vol. 67, (2013), pp. 123-161.


\bibitem {Veretennikov}A. Yu. Veretennikov, On large deviations in the
averaging principle for {SDEs} with a ``full dependence'', correction,
{\tt arXiv:math/0502098v1 [math.PR]} (2005). Initial
article in \textit{Annals of Probability}, Vol. 27, No. 1, (1999), pp. 284-296.

\bibitem {VeretennikovSPA2000}A. Yu. Veretennikov, On large deviations for
SDEs with small diffusion and averaging, \textit{Stochastic Processes and
their Applications}, Vol. 89, Issue 1, (2000), pp. 69-79.

\end{thebibliography}
\end{document}